\newtheorem{theorem}{Theorem}[section]
\newtheorem{proposition}[theorem]{Proposition}
\newtheorem{corollary}[theorem]{Corollary}
\newtheorem{remark}[theorem]{Remark}
\theoremstyle{definition}
\theoremstyle{remark}
\newcommand{\R}{\mathbb{R}}
\newcommand{\N}{\mathbb{N}}
\newcommand{\Z}{\mathbb{Z}}
\newcommand{\runo}{\mathbb{R}}
\newcommand{\rdue}{\mathbb{R}^2}
\newcommand{\rtre}{\mathbb{R}^3}
\newcommand{\e}{\varepsilon}
\newcommand{\f}{\varphi}
\newcommand{\matrici}{\mathbb{M}^{3 \times 3}}
\newcommand{\simm}{\mathbb{M}^{3 \times 3}_{sym}}
\newcommand{\rotazioni}{SO(3)}
\def\dsp{\displaystyle}
\newcommand{\va}[1]{\left| #1 \right|}  %valore assoluto
\newcommand{\su}[2]{{\left. #1 \right|}_{#2}}
\DeclareMathOperator{\dist}{dist}
\DeclareMathOperator{\curl}{curl}
\newcommand{\lploc}{\mathnormal{L}^p_{\rm loc}}
\newcommand{\lp}{\mathnormal{L}^p}
\newcommand{\AS}{\mathcal{AS}(\Gamma, \mathbf B)}
\newcommand{\AD}{\mathcal{AD}}
\newcommand{\ADM}{\mathcal{ADM}}
\newcommand{\tot}{E_{\alpha,R}^{tot}}
\newcommand{\el}{E_{\alpha,R}^{el}}
\newcommand{\Om}{\Omega}
\newcommand{\dpr}{{\mathcal D}'}
 \title[A variational model for dislocations at semi-coherent interfaces]{A variational model for dislocations at semi-coherent interfaces}
 \author[S. Fanzon]
 {Silvio Fanzon}
 \address[Silvio Fanzon]{University of Sussex, Department of Mathematics, Pevensey 2 Building, Falmer Campus,
Brighton BN1 9QH, United Kingdom}
 \email{S.Fanzon@sussex.ac.uk}
\author[M. Palombaro]
 {Mariapia Palombaro}
 \address[Mariapia Palombaro]{University of Sussex, Department of Mathematics, Pevensey 2 Building, Falmer Campus,
Brighton BN1 9QH, United Kingdom}
 \email{M.Palombaro@sussex.ac.uk}
\author[M. Ponsiglione]
 {Marcello Ponsiglione}
 \address[Marcello Ponsiglione]{Dipartimento di Matematica, Sapienza Universit\`a di Roma, 00185 Roma, Italy}
 \email{ponsigli@mat.uniroma1.it}
\begin{document}

\begin{abstract}
\small{
We propose and analyze a simple variational model for dislocations at semi-coherent interfaces. The energy functional describes  the competition between two terms: a surface energy induced by dislocations that compensate the lattice misfit at the interface,  and a far field elastic energy, spent to  decrease the amount of  needed dislocations. 
We prove that the former scales like the surface area of the interface, the latter like its diameter.  

The proposed continuum model is
deduced from some rigorous derivation from the semi-discrete theory of dislocations. Even if we deal with finite elasticity, linearized elasticity naturally emerges in our analysis since the far field strain vanishes as the interface size increases.

\vskip .3truecm \noindent Keywords: Nonlinear elasticity, Geometric rigidity, Linearization, Crystals, Dislocations, Heterostructures.
\vskip.1truecm \noindent 2000 Mathematics Subject Classification:  74B20, 74K10, 74N05, 49J45.
%\vskip.1truecm \noindent 2000 Mathematics Subject Classification:

}
\end{abstract}

\maketitle
\tableofcontents

\section*{Introduction}\label{introduction}
Dislocations are line topological defects in the periodic structure of crystals. Their motion represents the microscopic mechanism of plastic flow, while their presence at grain boundaries decreases the energy induced by lattice misfits. 

In this paper we propose and analyze a variational model describing dislocations at semi-coherent interfaces, focusing on flat two dimensional interfaces between two crystalline materials with different underlying lattice structures $\Lambda^+$ and $\Lambda^-$.  Specifically, we assume that the lattice  $\Lambda^+$ 
(lying on top of $\Lambda^-$) is a dilation with factor $\alpha>1$ of $\Lambda^-$. We are interested in semi-coherent interfaces, corresponding to small misfits $\alpha \approx 1$.

Since in the reference configuration (where both crystals are in equilibrium) the density of the atoms of $\Lambda^+$ is lower than that of $\Lambda^-$,   in the vicinity of the interface  there are many atoms  having the ``wrong'' number of first neighbors (see first picture in Figure \ref{freedisloc}). Such atoms form line singularities (relatively closed paths lying on the interface), which correspond to edge dislocations. The crystal can reduce the number of such dislocations through a compression strain acting on $\Lambda^+$ near the interface, at the price of storing some far field elastic energy. 
A deformation that coincides with $x \mapsto \alpha^{-1}x$  near the interface would provide a defect-free perfect match between the crystal lattices (see third picture in Figure \ref{freedisloc}). 
In fact, experimental evidence suggets that the true deformed configuration is  the result of a balance (see middle picture in Figure~\ref{freedisloc}) between the elastic energy spent to match the crystal structures and the dislocation energy spent to release the far field  elastic energy, with the former scaling (for defect free configurations) like the volume of the body and the latter  
like the surface  area of the interface.

\begin{figure}[h]
\begin{center}
{\includegraphics[scale=0.75]{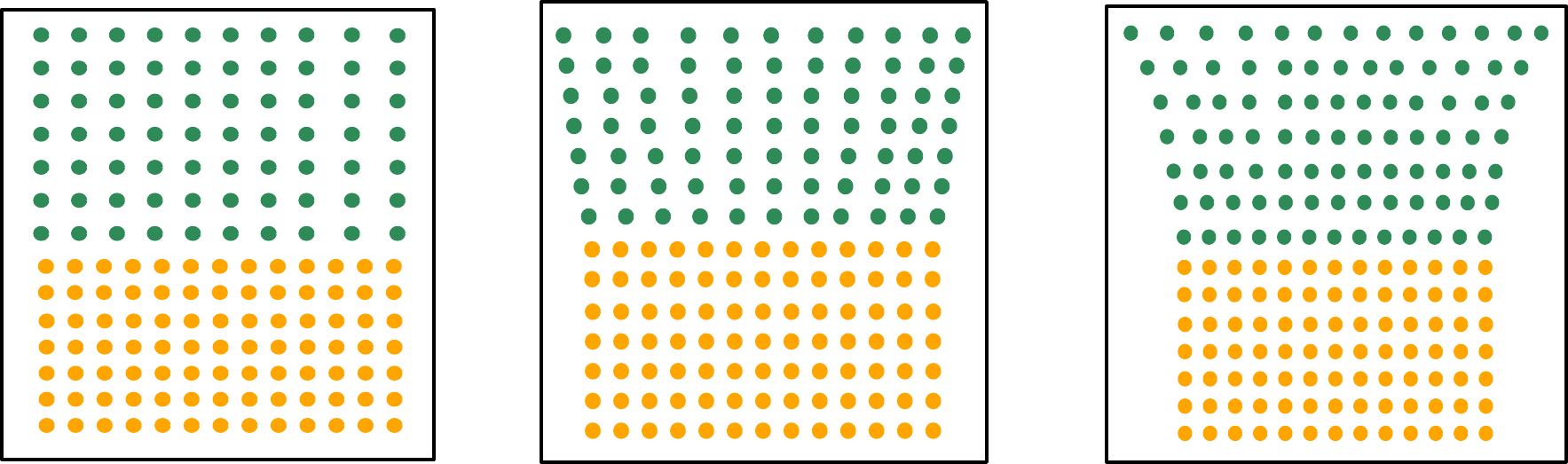}}
\caption{\label{freedisloc} Left: A bulk stress-free configuration. Right: a defect-free configuration. Center: a schematic picture of a true energy minimizer.}   
\end{center}
%\label{arancione11}
\end{figure}

This is why the common perspective of the scientific community working on this problem has been to understand which configurations of dislocations minimize the elastic stored energy, and much effort has been devoted to describe those configurations  for which the dislocation energy contribution is predominant, and the far field elastic energy is negligible (\cite{ShRe}, \cite{Hirsch}).  
As a matter of fact,  for  large crystals,  periodic patterns of edge dislocations are observed at  interfaces.
%\hidden{This is validated by the experimental evidence that, for  large crystals,  periodic patterns of edge dislocations are observed at  interfaces.}  

Here, we propose  a simple variational model to analyze the competition between surface and elastic energy.  We show that, for large interfaces, the dislocation energy of minimizers scales like the area of the interface, while the elastic far field energy like its diameter.   

The proposed model is not purely discrete; indeed it is a continuum model that stems from considerations based on a rigorous derivation from the so called semi-discrete theory of dislocations.  
In this framework, the reference configuration is described as a continuum, while dislocations are represented by one dimensional singularities of the strain. In single crystals, the energy induced by straight  edge dislocations  has a logarithmic tail, which diverges as the ratio between the crystal size and the atomic distance tends to $+\infty$. The $\Gamma$-convergence analysis for these systems as the atomic distance tends to zero has been recently done in \cite{DGP}, showing that dipoles as well as isolated dislocations do not contribute to decrease the elastic energy, so that in single crystals only the so called {\it geometrically necessary dislocations} are good competitors in the energy minimization. 

Quite different is the case of polycrystals treated in this paper, where dislocations contribute to decrease the elastic energy. 
A rigorous variational  justification of dislocation nucleation in heterostructured nanowires was obtained by M\"uller and Palombaro \cite{mp} in the context of nonlinear elasticity.  The model 
proposed in \cite{mp} was later generalized to a discrete to continuum setting in \cite{LPS1,LPS2} 
(see also \cite{ALP} for recent advancements in the microscopic setting). 
A variational model for misfit dislocations in elastic thin films, in connection with epitaxial growth,  has been recently proposed in \cite{FFLM}. 

In the first part of the paper we set and analyze the problem in the semi-discrete framework, which provides the theoretical background for the proposed continuum model.   
In the semi-discrete model, the reference configuration of
the hyperelastic body is the cylindrical region $\Omega_r:=S_r \times (-hr,hr)$, 
where $r,h>0$ and $S_r:=[-r/2,r/2]^2$.
The interface $S_r \times \{0\}$ separates the two regions of the body, $\Omega_r^-:=S_r \times (-hr,0)$ and $\Omega_r^+:=S_r\times (0,hr)$, 
with underlying  crystal structures $\Lambda^-$ and $\Lambda^+$ respectively. 
We will refer to  $\Omega_r^-$ and  $\Omega_r^+$ as the underlayer and overlayer, respectively.
We  assume that the material equilibrium is the identity $I$ in $\Omega^-_r$ (implying that the underlayer is already in equilibrium) and $\alpha I$ in
 $\Omega^+_r$, where $\alpha > 1$ measures the misfit between the two lattice parameters. Notice that the identical deformation of $\Omega_r$, 
which corresponds to a dislocation-free configuration,
 is not stress-free, since the overlayer is not in equilibrium. 
  Furthermore, in order to simplify the analysis, we assume that $\Omega^-_r$ is rigid, so that 
 only $\Omega_r^+$ is subjected to deformations. 
 
 We assume that deformations try to minimize a stored elastic energy (in $\Omega^+_r$), whose density is described by a nonlinear frame indifferent function $W \colon \matrici \to [0,+\infty)$.
    In classical finite elasticity, $W$ acts on deformation gradients $F:=\nabla v$. In this model dislocations are introduced as line defects of the strain: more precisely,    
we allow the strain field $F$ to have a non vanishing curl, concentrated on  dislocation lines on the interface $S_r$. 
Therefore, the admissible strains are maps
 $F \in \lp (\Omega_r;  \matrici)$ (where $p<2$ is fixed, according to the growth assumptions on $W$) that satisfy
 \begin{equation} \label{salto0}
 \curl F = \sum_i  - \mathbf{b}_i \otimes \dot{\gamma_i} \, d \mathcal{H}^1 \llcorner \gamma_i 
 \end{equation}
in the sense of measures and such that $F=I$ in $\Omega^-_r$. Here $\{\gamma_i\}$ is a finite collection of closed curves,  
and $\mathbf{b}_i \in \rtre$
denotes the Burgers vector, which is constant on each $\gamma_i$.
The Burgers vector belongs to the set of slip directions, which is a given material property of the crystal. 
We assume that 
the slip directions are given by $b \Z \{e_1,e_2\}$, where $b>0$ represents the lattice spacing of $\Lambda^-$, 
and that the dislocation curves $\gamma_i$ have support on the  grid $\left[\big((b\Z \times \R) \cup (\R\times b\Z)\big) \cap S_r \right]\times \{0\}$. 
Notice that this choice is consistent with the cubic crystal structure, and that $b$ is independent of $r$, i.e., independent of the size of the body.  

In Section \ref{mod1} we study the asymptotic behaviour of minimizers of the elastic energy functional with respect to all possible pairs of compatible (i.e., satisfying \eqref{salto0}) strains and dislocations, refining the analysis first done in \cite{mp}. 
In Proposition \ref{prop:dislo} we show that, as $r \to +\infty$, the elastic energy of minimizers per unit area of the interface tends to a given 
energy surface density $E_\alpha$.  
As a consequence, we show that there exists a critical $r^*$ such that, for larger size of the interface, dislocations are energetically 
favorable (see Theorem \ref{thm:dislo favorevoli}). The proof of these results is based on an explicit construction of an array of dislocations (see Figure \ref{lungh0}) and of admissible fields, which is optimal in the energy scaling (see Proposition \ref{prop:doppio cilindro}). While we could guess  that the dislocation configuration is somehow optimal, the strains that we define are surely not, so that our construction does not provide the sharp formula for the surface energy density $E_\alpha$, which depends on the specific form of the elastic energy density $W$. 
\begin{figure}[h]
\begin{center}
{\includegraphics[scale=0.55]{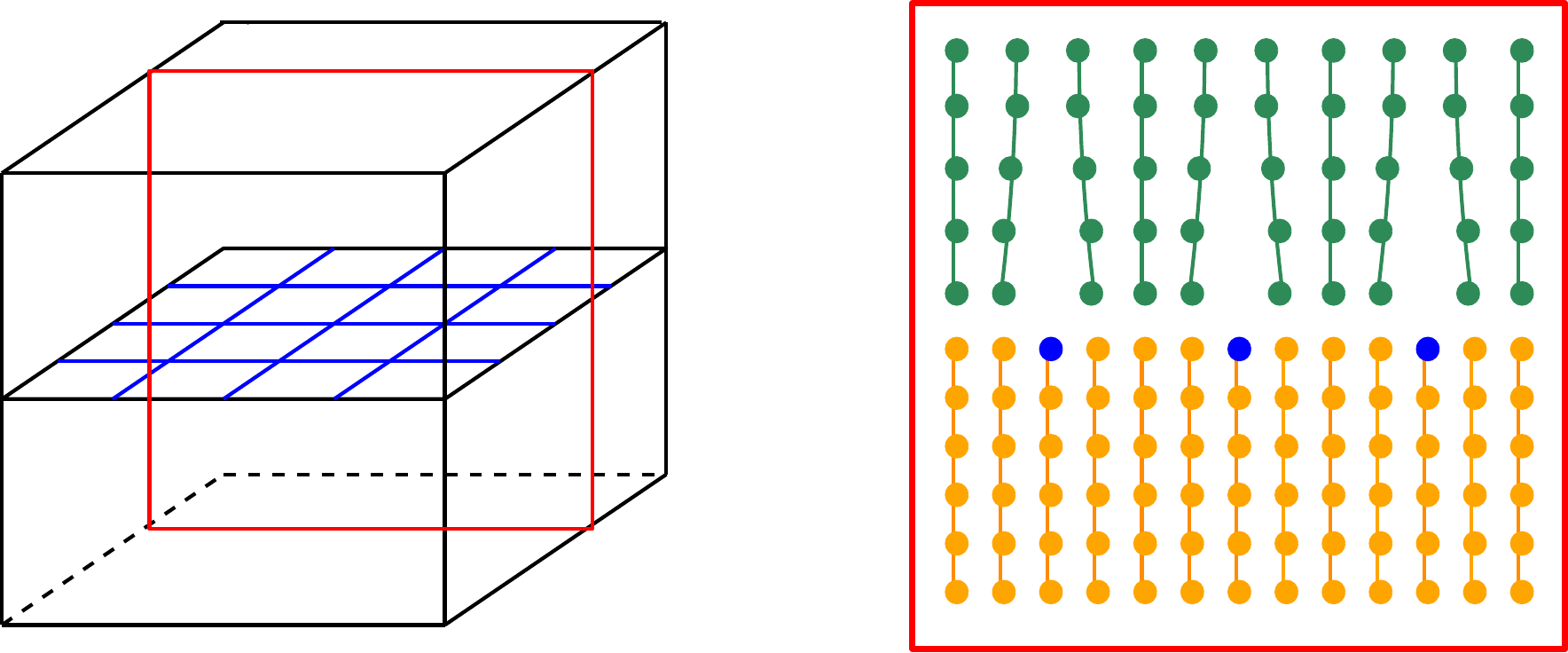}}
\caption{\label{lungh0} Left: The 3D crystal. Right: a 2D cross-section with the  lattice structure. Red squares: The 2D slice. Green: The lattice $\Lambda^+$. Orange: The lattice $\Lambda^-$. Blue: Edge dislocations.}   
\end{center}
%\label{arancione}
\end{figure}
%Once we have identified  the surface energy density $E_{\alpha}$ induced by dislocations, 

In the second part of the paper we propose a simple continuous model for dislocations at semicoherent 
interfaces, describing in particular heterogeneous nanowires. 
Although we deal with a continuum model, our approach is consistent with the discrete analysis developed in \cite{LPS1,LPS2}. In this model 
we work with actual gradient fields far from the interface, where the curl of the strain is now a diffuse measure, in contrast with  \eqref{salto0}. Dislocation nucleation is 
taken into account by introducing a free parameter into the total energy and eventually optimizing on it.
Specifically, we assume that the underlayer occupies the cylindrical region $\Om_R^-$ (which is fixed), 
while the reference configuaration of the overlayer 
is $\Om_r^+$,  where $r=\theta R$ and $\theta \in(0, 1)$ is a free parameter in the total energy functional. 
The class of admissible deformation maps is defined by 
%\textcolor{red}{in realta' al pedice andrebbe r piccolo, altrimenti non si capisce cosa succede quando si riscala (l'energia elastica 
%riscala per $r=R \theta$).}
\begin{equation}
\ADM_{\theta,R} :=\left\{v\in W^{1,2}(\Omega^+_{r}; \rtre) \, \colon \,  v (x) = \frac{1}{\theta} \, x \, \text{ on }  S_{ r} \right\}.
\end{equation}
In this way $v(S_{ r})=S_R$ for all $v\in \ADM_{\theta, R}$. 
In view of the analysis performed in the semi-discrete setting, the area of 
$S_R \smallsetminus S_{ r}$ divided by $b$  can be interpreted as the total dislocation length. 
This suggests to introduce the plastic energy defined by
\begin{equation*}
%\label{eplastic}
E^{pl}_{R}(\theta):= \sigma r^2 ({\theta}^{-2}-1) = \sigma R^2 (1-{\theta}^2).
\end{equation*}
Here $\sigma >0$ is a given  material constant of the crystal, which multiplied by $b$ represents the energetic cost of 
dislocations per unit length.  
In principle, $\sigma$ could be derived starting from the surface energy density, yielding in the limit of vanishing misfit $\dsp\sigma= \lim_{\alpha\to 1} \frac{E_\alpha}{\alpha^2-1}$ (see \eqref{conj}).
%\hidden{si assume che $\theta$ sia $\alpha^{-1}$}
Alternatively,
% while \eqref{sigma?} provides a formula for 
$\sigma$ can be expressed in terms of the Lam\'e moduli of the linearized elastic tensor corresponding to $W$ and of the (unknown) chemical core energy density $\gamma^{ch}$ induced by dislocations 
(see \eqref{sigma?} in Section \ref{conjr}). The latter contribution is implicitly taken into account by the nonlinear energy density $W$ in finite elasticity. 
%   $\sigma$ represents the energy induced by a single dislocation line per unit length. 

Based on the previous considerations, our goal is to study the total energy functional defined by
\[
\tot (\theta,v):= E^{el}_{\alpha,R}(\theta,v)  + E^{pl}_{R}(\theta) =  \int_{\Omega_{r}^+ }  W (\nabla v (x)) \, dx + \sigma R^2 (1-{\theta}^2) ,
\]
for $v\in \ADM_{\theta,R}$.
Set 
\begin{equation*}
\el (\theta):= \inf \left\{E^{el}_{\alpha,R}(\theta,v)  \, \colon \,   v \in \ADM_{\theta, R} \right\}  , \qquad \tot (\theta) := \el (\theta) + E^{pl}_{R}(\theta).
\end{equation*}
%where
%\[
%\el (\theta):= \inf \left\{\int_{\Omega_{r}^+ }  W (\nabla v (x)) \, dx \, \colon \,   v \in \ADM_{\theta, R} \right\}  \,.
%\]
Notice that if $\theta=1$, then no dislocation energy is present, i.e. $\tot (1)= \el (1)$. Instead, if 
$\theta= \alpha^{-1}$ no elastic energy is stored (since $v(x):= \alpha x$ is admissible and $W(\alpha I)=0$). 

The remaining and main part of the paper is devoted to the analysis of almost minimizers of  $\tot$, as $R\to +\infty$. In Theorem \ref{il teorema} we show that the optimal $\theta_R$ tends to $\alpha^{-1}$ from below, showing that the dislocation energy spent to release bulk energy is predominant, but still $\theta_R\neq \alpha^{-1}$, so that also a far field bulk energy is present (see Figure \ref{freedisloc}).

In order to compute the optimal $\theta_R$,
we perform a Taylor expansion (through a $\Gamma$-convergence analysis) of the plastic and elastic part of the energy, proving in particular, 
that the first scales like $R^2$, while the second like $R$. Prefactors in such energy expansions are computed, depending only on $\alpha, \, \sigma$ and on the fourth-order tensor obtained by 
linearizing $W$ about the identity. 

In conclusion, the proposed functional provides a simple prototypical variational model to describe the competition between the dislocation energy concentrated around the interface between 
materials with 
different crystal structures, and the far field elastic energy. 
This model fits into the class of free boundary problems, since  the overlayer is a variable in the minimization problem, though only through a scalar parameter representing its size.  Our formulation is quite specific, dealing with two lattices where one is a small dilation of the other. Therefore, it is meant to model semi-coherent interfaces between two different lattices, for example in heterostructured  nanowires.
Nevertheless,  our approach seems flexible enough to be adapted to more general situations, to model epitaxial crystal growth (where the surface energy of the free external boundary in contact with air should be added to the energy functional), and to more general interfaces, such as grain boundaries, where the misfit in the crystal structures is due to mutual  
rotations between the grains instead of dilations of the lattice parameters.

\section{A line defect model}\label{mod1}
\subsection{Description of the model}
We introduce a semi-discrete model for dislocations, which are described as line defects of the strain. 
 
Let $\Omega_1=S_1 \times (-h,h)$
be the reference configuration of a cylindrical hyperelastic body. Here $h>0$ is a fixed height and 
$S_1= \left\{ (x_1,x_2,0) \in \rtre \, \colon \, \va{x_1}, \va{x_2} < 1/2 \right\}$  
is a square of side one centered at the origin, separating  parts of the body with underlying 
crystal structures $\Lambda^-$ and $\Lambda^+:= \alpha \Lambda^-$, with $\alpha > 1$. 
For any given $r>0$, we will consider scaled versions of the body  $\Omega_r:= r\Omega_1$ and $S_r:= r S_1$. 
%

% LetConsider a region

% 
%$\Omega_r=S_r \times \runo$, which represents the reference configuration of a cylindrical hyperelastic body. Here 
%\[
%S_r= \left\{ (x_1,x_2,0) \in \rtre \, \colon \, \va{x_1} < \frac{r}{2}, \va{x_2} < \frac{r}{2} \right\} 
%\]

%The number $r$ represents the number of rows of atoms in a vertical section of the material.
Set $\Omega_r^-:=S_r \times (-hr,0)$ and $\Omega_r^+:=S_r \times (0,hr)$.
We assume that the material equilibrium is the identity $I$ in $\Omega^-_r$ (which means that the material is already in equilibrium in  $\Omega^-_r$) and $\alpha I$ in
 $\Omega^+_r$.
 %where $\alpha > 1 $ measures the misfit between the two lattice parameters. 
 We are interested in small misfits, which generate so called semi-coherent interfaces; therefore, we will deal with $\alpha \approx 1$. 
 More specifically, we assume that the lattice distances of $\Lambda^-$ and $\Lambda^+$ are commensurable, and in particular that  
 $\alpha :=1+1/n$ for some given $n \in \N$.
Moreover, in order to simplify the analysis, we assume that $\Omega^-_r$ is rigid, namely, that 
the admissible deformations coincide with the identical deformation in $\Omega^-_r$.
 
 According to the hypothesis of hyperelasticity, we assume that the crystal tries to minimize a stored elastic energy (in $\Omega^+_r$), whose density is described by a function $W \colon \matrici \to [0,+\infty)$. We require that $W$ is continuous and frame indifferent,
i.e.,
\begin{equation} \label{frame ind}
W(F)=W(R \,F) \qquad \text{for every} \quad F \in \matrici, \, R \in \rotazioni \,.
\end{equation}
Moreover, there exist $p \in [1,2)$ and constants $C_1,C_2>0$ such that $W$ satisfies the following growth conditions:
\begin{equation} \label{growth1}
C_1 \left(  \,   \dist^2 (F, \alpha \rotazioni ) \wedge (\va{F}^p +1)  \right) \leq W(F) \leq C_2 
\left(  \,   \dist^2 (F,\alpha \rotazioni ) \wedge (\va{F}^p +1) \right)
\end{equation}
for every $F \in \matrici$.  

In absence of dislocations, the deformed configuration of the body can be described by a
sufficiently smooth deformation $v: \Omega_r^+ \to \rtre$. The corresponding elastic energy 
is given by
\begin{equation}\label{elene}
E^{el}(v):= \int_{\Omega_r^+} W(\nabla v) \, dx.
\end{equation}
The field $\nabla v$ is referred to as deformation strain.

We now explain how to introduce dislocations in the present model. 
As in \cite{mp}, dislocations are described by deformation strains  whose curl is not free, but
 concentrated on lines lying on  the
 interface $S_r$ between $\Omega^-_r$ and $\Omega^+_r$.  
  
 Assume for the time being that the dislocation line $\gamma \subset S_r$ is a Lipschitz, relatively closed curve in $S_r$. 
The latter condition implies that $\Omega_r \smallsetminus \gamma$ is not simply connected.
 Therefore, the strain is a  map
 $F \in \lp (\Omega_r;  \matrici)$ that satisfies
 \begin{equation} \label{salto}
 \curl F =  - \mathbf{b_\gamma} \otimes \dot{\gamma} \, d \mathcal{H}^1 \llcorner \gamma 
 \end{equation}
in the sense of distributions and $F=I$ in $\Omega^-_r$. 
\begin{figure} 
\centering   
\def\svgwidth{6cm}   
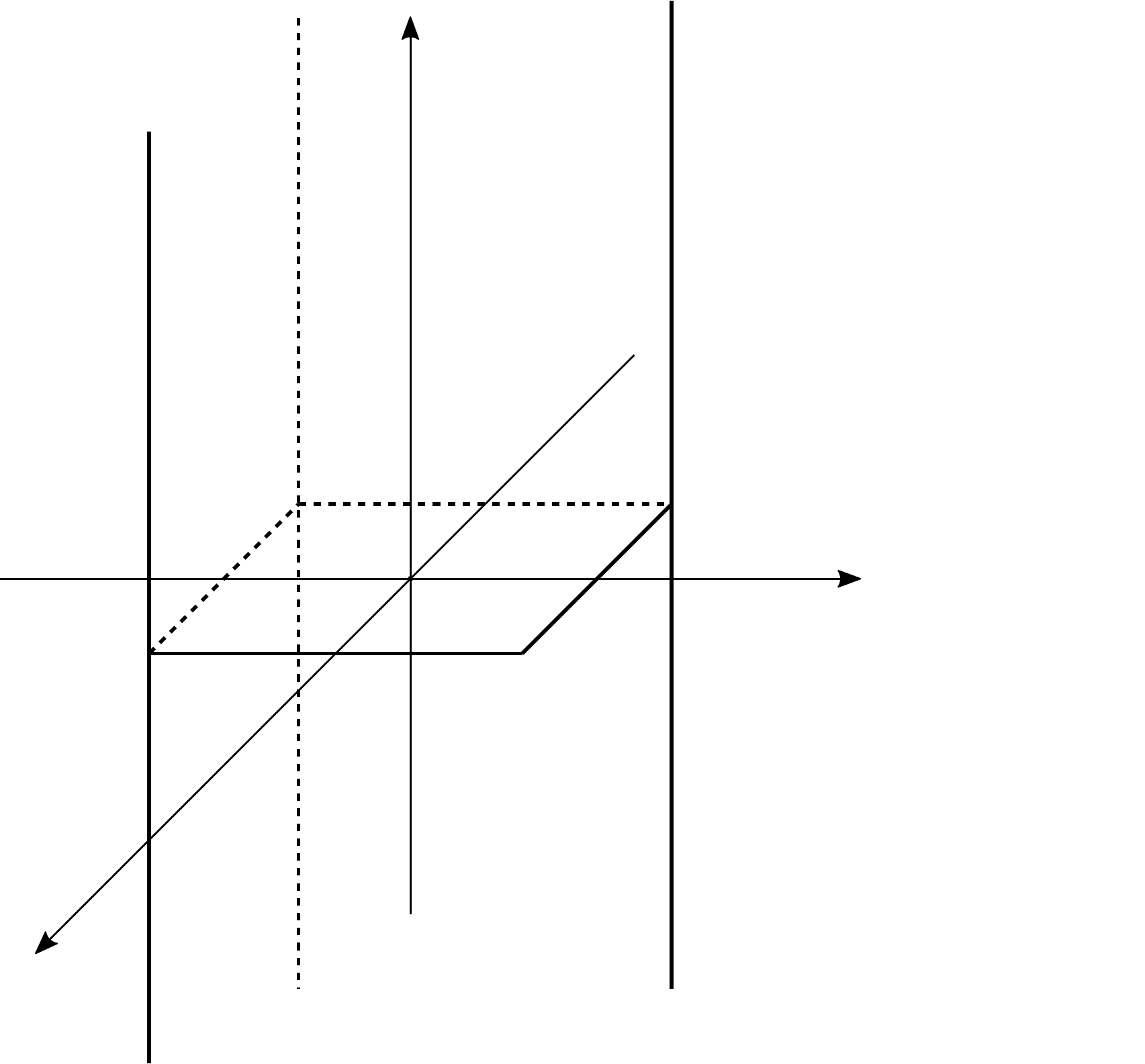  
\caption{Reference configuration $\Omega_r$\, .}   
\label{fig:cilindro}
\end{figure} 
The vector $\mathbf{b}_\gamma \in \rtre$
denotes the Burgers vector, which is constant on $\gamma$, and together with the dislocation line $\gamma$, uniquely
characterizes the dislocation. 
The Burgers vector belongs to the class of slip directions, which is a given material property of the crystal. As a further simplification, we assume that 
the slip directions are given by $b \Z \{e_1,e_2\}$, where $b>0$ represents the lattice spacing of the lower crystal $\Omega_r^-$. Notice that this choice is consistent with a cubic crystal structure, and that $b$ is independent of $r$, i.e., independent of the size of the body.  

%a We will require that $\va{\mathbf{b}}>\tilde{c}$ where $\tilde{c}>0$ is a fixed constant.
%\textcolor{red}{(spiegare che questo ha senso nel limite per $r \to \infty$)}
If $\omega \subset \Omega_r \smallsetminus \gamma$ is a simply connected region, then \eqref{salto} implies that $\curl F = 0$ in $\dpr(\omega,\matrici)$ and therefore there exists $v \in \mathnormal{W}^{1,p}(\omega; \rtre)$ 
%Nota: chiamiamo la deformazione v e il displacement u
such that 
$F=\nabla v$ a.e. in $\omega$. 
Thus, any vector field $F$ satisfying \eqref{salto} is locally the gradient of a Sobolev map. 
In particular, if 
$\Sigma$ is a sufficiently smooth surface having $\gamma$ as its boundary, then one can find $v \in \mathnormal{SBV}_{\rm loc} (\Omega_r;\rtre)$ such that $F=\nabla v$, $v=x$ in $\Omega^-_r$ 
and its distributional gradient satisfies
\[
\mathnormal{D} v = \nabla v \, d x + \mathbf{b}_{\gamma} \otimes \nu \, d \mathcal{H}^2 \llcorner \Sigma
\] 
where $\nu$ is the unit normal to $\Sigma$.
%Nota: se giri gli assi (il cilindro ora è in verticale per raccordarlo con il nostro modello) allora il salto avviene sulla terza componente della deformazione v
That is, $F=\nabla v$ is the absolutely continuous part of the distributional gradient of $v$. 
As customery (see \cite{ortiz_vienna}), we interpret $F$ as the elastic part of the deformation $v$, so that the elastic energy induced by $v$ is given by
$$
E^{el}(v):= \int_{\Omega_r^+} W(F) \, dx.
$$
%At the same time $v$ has a constant jump equal to $\mathbf{b}$ across any surface having $\Gamma$ as its boundary.
%In \cite{mp} it is considered the more general case where $\Omega^+$ is in 
%equilibrium with $H$.
From now on we will assume that the dislocation curves have support in the grid $(b\Z \times \R) \cup (\R\times b\Z)$. Moreover, we will consider multiple dislocation curves. 
More precisely, we denote by $\AD$ the class of all admissible pairs $(\Gamma,\mathbf B)$, where $\Gamma$ is a finite collection of admissible closed curves $\{\gamma_i\}$, and $\mathbf B = \{\mathbf b_i\}$, $b_i \in b \Z \{e_1,e_2\}$, is the corresponding collection of Burgers vectors. 
Notice that each dislocation curve can be decomposed into ``minimal components'', i.e., we can always assume that $\gamma_i = \partial Q_i$, 
where $Q_i$ is a square of size $b$ with sides contained in the grid $(b\Z \times \R) \cup (\R\times b\Z)$. 
Given an admissible pair $(\Gamma,\mathbf B)$, we denote by 
$\mathbf{b} \otimes \dot{\gamma}(x)$ the field  that coincides with $\mathbf{b}_i \otimes \dot\gamma_i(x)$ if $x$ belongs to a single curve $\gamma_i$, and with 
$ \mathbf{b}_i \otimes \dot\gamma_i(x) + \mathbf{b}_j \otimes \dot\gamma_j(x)$ if 
$x$ belongs to two different curves $\gamma_i$ and $\gamma_j$.
For the sake of computational simplicity, whenever it is convenient  we will  assume 
\begin{equation}\label{simp}
\frac{r(\alpha -1)}{2b}\in\N.  
 \end{equation}
Recalling that $\dsp \alpha = 1 + \frac{1}{n}$, assumption \eqref{simp} implies that $\dsp \frac{r}{2b}\in\N$.
%\hidden{l'ipotesi giusta dovrebbe essere che $\delta = nb$, ovvero $\frac{1}{\alpha -1}=n$, cioe' $\alpha = 1+\frac{1}{n}$}
The set of admissible deformation strains $\AS$ associated with a given admissible dislocation $(\Gamma, \mathbf B)$ is then defined by
\begin{equation} \label{transizioni}
\AS := \left\{ F \in \lploc (\Omega_r; \matrici) \colon F=I \text{ in } \Omega^-_r, \,\curl F = - \mathbf{b} \otimes \dot{\gamma} \, d \mathcal{H}^1 \llcorner \Gamma  \right\},
\end{equation}
where, abusing notation, we identify $\Gamma$ with the union of the supports of $\gamma_i$.
%where the dislocation line $\Gamma$ and the Burgers vector $\mathbf{b}$ are both prescribed. 
We define the minimal energy induced by the pair  $(\Gamma,\mathbf{B})$ as
\begin{equation} \label{energia4}
E_{\alpha, r} ( \Gamma, \mathbf{B}) := \inf \left\{ \int_{\Omega_r^+} W(F(x)) \, dx \, \colon F \in \AS  \right\} 
\end{equation}
and the minimal energy induced by the lattice misfit as
\begin{equation}\label{minen}
E_{\alpha,r}:= \min \left\{ E_{\alpha,r}(\Gamma,\mathbf B) \, \colon \, (\Gamma,\mathbf B)\in \AD  \right\}.
\end{equation}
Notice that, by the growth assumptions \eqref{growth1} on $W$, the minimum problem in \eqref{minen} involves only dislocations with Burgers vectors in a finite set,
so that the existence of a minimizer is trivial. 
%\textcolor{red}{Commento sul fatto che il minimo è raggiunto}
We denote by $E_{\alpha, r}(\emptyset)$ the minimal elastic energy induced by curl free strains.
Notice that $E_{\alpha, r}(\Gamma,\mathbf{B})= E_{\alpha, r}(\emptyset)$ whenever $\Gamma \cap S_r = \emptyset$.\\

{\bf Notation.} Throughout the paper the same letter $C$ denotes various positive constants that are independent of $r$, but whose precise value may change from place to place.

\subsection{Scaling properties of the energies}
The next proposition, proved in  \cite[Proposition 3.2]{mp}, states that the quantities defined by 
\eqref{energia4} and \eqref{minen} are strictly positive. 
\begin{proposition} \label{prop:positivo}
For all $r>0$ one has  $E_{\alpha, r}>0$. Moreover,  $E_{\alpha, r}(\emptyset)=r^3 \, E_{\alpha, 1}(\emptyset)$, 
with  $E_{\alpha, 1}(\emptyset)>0$.
%, and  $E_{\alpha, r}(\Gamma,\mathbf{B}) >0$	 for all $(\Gamma,\mathbf{B})\in \AD$.
\end{proposition}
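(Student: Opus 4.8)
The plan is to treat the scaling identity by an elementary change of variables, and the strict positivity by a rigidity/curl argument at the interface, made quantitative; the second is where the work lies. For the scaling identity, note that for the empty dislocation $\curl F=0$ on all of $\Omega_r$, so $F=\nabla u$ with $u\in W^{1,p}(\Omega_r;\rtre)$, and $F=I$ in $\Omega^-_r$ forces $u(x)=x+c$ there; hence on $\Omega^+_r$ we may write $F=\nabla v$ with $v\in W^{1,p}(\Omega^+_r;\rtre)$ whose trace on $S_r$ is $x$ up to a constant. Conversely, any such $v$ yields an admissible $F$, since the vanishing of the tangential jump of $\nabla v-I$ across $S_r$ (equivalently, $v|_{S_r}=x+{\rm const}$) is exactly the condition $\curl F=0$ there. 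As the energy depends only on $\nabla v$ on $\Omega^+_r$ and is translation-invariant,
\[
E_{\alpha,r}(\emptyset)=\inf\Big\{\textstyle\int_{\Omega^+_r}W(\nabla v)\,dx:\ v\in W^{1,p}(\Omega^+_r;\rtre),\ v(x)=x \text{ on } S_r\Big\}.
\]
I would then rescale: for such $v$ put $\tilde v(y):=\tfrac1r v(ry)$, $y\in\Omega^+_1$; then $\nabla\tilde v(y)=\nabla v(ry)$, $\tilde v(y)=y$ on $S_1$, and $\int_{\Omega^+_1}W(\nabla\tilde v)\,dy=r^{-3}\int_{\Omega^+_r}W(\nabla v)\,dx$. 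Since $v\mapsto\tilde v$ is a bijection of the two admissible classes, passing to infima gives $E_{\alpha,r}(\emptyset)=r^3E_{\alpha,1}(\emptyset)$. After this it remains to prove $E_{\alpha,r}>0$ for every $r$, since the empty dislocation competes in \eqref{minen}, whence $E_{\alpha,1}(\emptyset)\ge E_{\alpha,1}>0$.

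The mechanism behind positivity is a curl obstruction at the interface, which I would first isolate in the exact case. If some $F\in\AS$ had $\int_{\Omega^+_r}W(F)\,dx=0$, then the lower bound in \eqref{growth1}, together with $|F|^p+1\ge1$, forces $\dist^2(F,\alpha\rotazioni)=0$, i.e.\ $F\in\alpha\rotazioni$ a.e.\ on $\Omega^+_r$. The measure $-\mathbf b\otimes\dot\gamma\,d\mathcal H^1\llcorner\Gamma$ does not charge the open set $\Omega^+_r$, so $F$ is curl-free there and $F=\nabla v$; $F$ being bounded, $v\in W^{1,\infty}(\Omega^+_r;\rtre)$ with $\nabla v\in\alpha\rotazioni$ a.e., and the rigidity theorem for gradients valued a.e.\ in a fixed copy of $SO(3)$ gives $F\equiv\alpha R$ on $\Omega^+_r$ for some $R\in\rotazioni$. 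Thus $F=\chi_{\Omega^-_r}I+\chi_{\Omega^+_r}\,\alpha R$ on $\Omega_r$, and its distributional curl across $S_r$ carries a density with respect to $\mathcal H^2\llcorner S_r$ given by the constant action of $\alpha R-I$ on the in-plane directions $e_1,e_2$. That density cannot vanish, because for any unit vector $w$
\[
|(\alpha R-I)w|^2=\alpha^2-2\alpha\langle Rw,w\rangle+1\ \ge\ (\alpha-1)^2>0 .
\]
Since $-\mathbf b\otimes\dot\gamma\,d\mathcal H^1\llcorner\Gamma$ is singular with respect to $\mathcal H^2\llcorner S_r$, this contradicts the defining relation of $\AS$ in \eqref{transizioni}.

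The step above only rules out energy exactly $0$; to obtain $E_{\alpha,r}(\Gamma,\mathbf B)>0$ one must argue stably, since this quantity is an infimum over $\AS$ with no evident minimizer ($W$ is frame indifferent, hence not quasiconvex, and $p<2$). This is the technical core I would import from \cite[Proposition 3.2]{mp}. Given $F\in\AS$ of small energy, one truncates: on $\{|F|>M\}$ the bound \eqref{growth1} gives $W(F)\ge C_1$ (here $p<2$ is essential), so that set has small measure, and a standard modification produces a competitor $v$ with $|\nabla v|$ bounded, $\int_{\Omega^+_r}\dist^2(\nabla v,\alpha\rotazioni)$ small, and trace on $S_r$ still essentially $x$ (pinned by \eqref{transizioni}). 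The Friesecke--James--M\"uller geometric rigidity estimate then produces a single $R\in\rotazioni$ with $\nabla v$ close to $\alpha R$ in $L^2(\Omega^+_r)$; but, $S_r$ being centered at the origin,
\[
\min_{c\in\rtre}\int_{S_r}|\alpha Rx+c-x|^2\,d\mathcal H^2=\int_{S_r}|(\alpha R-I)x|^2\,d\mathcal H^2\ \ge\ (\alpha-1)^2\!\int_{S_r}|x|^2\,d\mathcal H^2=\frac{(\alpha-1)^2r^4}{6},
\]
which contradicts the proximity of the trace of $v$ to that of $\alpha Rx+c$ once the energy is small. Hence $E_{\alpha,r}(\Gamma,\mathbf B)\ge c(r,\alpha)>0$ uniformly in $(\Gamma,\mathbf B)\in\AD$, so $E_{\alpha,r}>0$. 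I expect the one genuinely delicate point to be the truncation carried out so as to keep control of the boundary trace on $S_r$: that is what lets geometric rigidity be combined with the boundary comparison above.
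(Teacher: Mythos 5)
The paper does not reprove this statement but simply cites \cite[Proposition 3.2]{mp}, so there is no in-paper argument to compare against; on its own merits, your scaling identity and your proof that $E_{\alpha,1}(\emptyset)>0$ (geometric rigidity plus the boundary comparison on $S_1$, with the truncation imported from \cite{mp}) are correct and are the standard route. However, your argument for the strict positivity of $E_{\alpha,r}$ --- the minimum over \emph{all} admissible dislocation configurations --- has a genuine gap.

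The gap is the claim that after truncation the competitor has ``trace on $S_r$ still essentially $x$ (pinned by \eqref{transizioni})''. This is false as soon as $\Gamma\neq\emptyset$. The constraint in \eqref{transizioni} only prescribes the curl of $F$ across $S_r$ to be the line measure $-\mathbf b\otimes\dot\gamma\,d\mathcal H^1\llcorner\Gamma$; equivalently, writing $F=\nabla v$ on the simply connected set $\Omega^+_r$, the trace of $v$ on $S_r$ is $x+g(x)$ where $g$ is locally constant on $S_r\smallsetminus\Gamma$ and jumps by the Burgers vectors $\mathbf b_i\in b\Z\{e_1,e_2\}$ across the curves of $\Gamma$. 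Such a staircase $g$ can approximate $(\alpha-1)x$ up to an error of order $b$ --- this is precisely the construction in the proof of Proposition \ref{prop:dislo} --- so the trace can be made close to $\alpha x$, the field $\nabla v$ can equal $\alpha I$ on most of $\Omega^+_r$, and your boundary comparison $\min_c\int_{S_r}|\alpha Rx+c-x|^2\geq(\alpha-1)^2r^4/6$ yields no contradiction, because the relevant comparison is with $x+g$, not with $x+c$. In other words, for nonempty $\Gamma$ the far-field elastic mechanism genuinely can be switched off, and positivity must instead come from the dislocation cores: around each $\gamma_i$ with $\mathbf b_i\neq 0$, the circulation of $F$ along transverse half-loops of radius $\rho$ equals $\mathbf b_i$ up to an $O(\rho)$ correction (the other half lies in $\Omega^-_r$ where $F=I$), so Jensen's or H\"older's inequality gives $\int|F|^p\gtrsim \rho^{1-p}|\mathbf b_i|^p$ per half-circle; where $|F|$ is large the minimum in \eqref{growth1} is realized by the $|F|^p+1$ branch, and integrating in $\rho$ and along $\gamma_i$ produces a positive lower bound. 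Your proof therefore needs a dichotomy ($\Gamma$ carrying no nonzero Burgers vector, handled by rigidity; $\Gamma$ carrying some nonzero Burgers vector, handled by the core estimate) to close; as written, the asserted uniform bound $E_{\alpha,r}(\Gamma,\mathbf B)\geq c(r,\alpha)>0$ does not follow from the rigidity argument alone.
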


Proposition \ref{prop:positivo} asserts that $E_{\alpha, r}(\emptyset)$ grows cubically in $r$. We will show that the energy \eqref{energia4} can grow quadratically in $r$ by suitably introducing dislocations on $S_r$.
In fact we will introduce  dislocations on many (of the order of ${(r(\alpha-1)/b)}^2$)  square  curves.

%\textcolor{red}{Mi sembra più scorrevole spostare qui l'enunciato di
%proposition 1.3}
%\medskip
%Recall the definition of  $E_{\alpha,r}$ in \eqref{minen}.

\begin{proposition} \label{prop:dislo}
There exists  $0<E_{\alpha}<+\infty$ such that
\begin{equation}\label{liminfsu}
\lim_{r\to +\infty } \frac{E_{\alpha,r}}{r^2}= E_{\alpha}.
\end{equation}

%\textcolor{red}{Mettere enunciato con $\liminf$ e $\limsup$ ?
%\[
%\limsup_{r \to +\infty} \frac{E_{\alpha,r}}{r^2} < + \infty  \qquad \qquad \liminf_{r \to +\infty} \frac{E_{\alpha,r}}{r^2} >0
%\]
%}
\end{proposition}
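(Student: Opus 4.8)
The plan is to establish \eqref{liminfsu} via the standard subadditivity-plus-lower-bound strategy, separating the argument into an upper bound and a matching lower bound. For the upper bound, I would exploit the construction announced in the introduction: an explicit array of $\sim (r(\alpha-1)/b)^2$ square dislocation loops $\gamma_i = \partial Q_i$ placed on $S_r$, together with an explicit admissible strain $F\in\AS$, whose elastic energy is controlled. The key point is that near each elementary square $Q_i$ one can write down a local corrector with curl concentrated on $\partial Q_i$ and Burgers vector in $b\Z\{e_1,e_2\}$, supported in a fixed-size neighbourhood of $Q_i$ (independent of $r$), so that the total energy is a sum of $O((r(\alpha-1)/b)^2) = O(r^2)$ contributions each bounded by a constant depending only on $W$, $b$, $\alpha$, $h$. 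This gives $\limsup_{r\to\infty} E_{\alpha,r}/r^2 \le C < +\infty$ and also $>0$ is automatic once we know $E_{\alpha,r}\ge c\,r^2$ from below; actually positivity of the limit will follow from the lower bound argument. I would take as a black box here that such a construction exists (Proposition \ref{prop:doppio cilindro} in the paper), and mostly need to check that stacking these local building blocks along the grid yields an admissible global strain in $\AS$ — this is where assumption \eqref{simp} guaranteeing $r/(2b)\in\N$ makes the bookkeeping clean.

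For the existence of the limit I would prove that $r\mapsto E_{\alpha,r}$ is \emph{almost superadditive} and \emph{almost subadditive} after dividing by $r^2$, i.e. establish a Fekete-type lemma. Concretely, given $r$ and $r'$ one can tile $S_{r+r'}$ (or a slightly larger square) by translated copies of $S_r$ and $S_{r'}$ and patch together near-optimal dislocation/strain pairs; the interfaces between the patches cost at most lower-order (surface, i.e. $O(r)$) energy because one can interpolate the strains on thin transition layers, using the $L^p$ growth bound in \eqref{growth1} on the bad set. This yields $E_{\alpha,r+r'} \le E_{\alpha,r} + E_{\alpha,r'} + C(r+r')$ (up to constants from the tiling remainder), and dividing by $(r+r')^2$ and invoking the subadditivity lemma shows that $\liminf$ and $\limsup$ of $E_{\alpha,r}/r^2$ coincide. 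The finiteness of the common value is the upper bound above; its strict positivity comes from a lower bound $E_{\alpha,r}\ge c_0 r^2$, which one gets by a localization/rigidity argument: on each unit cell of the interface the strain must accommodate the misfit $\alpha$ either elastically (paying volume energy, which near the interface is at least a fixed amount per cell) or through a dislocation carrying Burgers vector of length $\ge b$, and by the geometric rigidity estimate (the $\dist^2(F,\alpha\rotazioni)$ part of $W$) the elastic cost of a dislocated cell near the interface is bounded below independently of $r$; summing over the $\Theta(r^2)$ cells crossing $S_r$ gives the claim.

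The main obstacle I expect is the patching step in the almost-subadditivity estimate: one must glue an admissible strain on $S_r$ to one on $S_{r'}$ across their common boundary while (i) keeping $\curl F$ supported on admissible grid curves with admissible Burgers vectors, and (ii) controlling the extra elastic energy by $O(\text{perimeter})$ rather than $O(\text{area})$. The natural device is to use a transition region of width $O(1)$ (in the $x_1,x_2$ variables) near the shared edge, insert a few extra dislocation loops there to absorb the incompatibility of the two boundary traces, and estimate the resulting strain using the sublinear $|F|^p$ branch of \eqref{growth1} on this thin slab of controlled width — its contribution is then $O(r)$, as needed. A secondary technical nuisance is that the overlayer $\Omega_r^+$ has height $hr$ growing with $r$, so "surface" energy near the patch is $O(r)\cdot O(hr) = O(r^2)$ unless the correctors decay away from the interface $S_r$; this forces the transition strains to be chosen so that they equal $I$ (or $\alpha I$ composed with a rotation) outside a slab of \emph{bounded} height above the interface, which is consistent with the building-block construction but must be verified carefully. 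Once these two points are handled, the Fekete argument and the rigidity lower bound are routine, and \eqref{liminfsu} follows with $0<E_\alpha<+\infty$.
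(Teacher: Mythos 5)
Your upper bound (the periodic array of square loops with double-pyramid correctors of bounded height, costing $O(r^2)$) is exactly the paper's construction, and taking Proposition \ref{prop:doppio cilindro} as a black box there is fine. The problem is the step you yourself flag as the main obstacle: the Fekete-type almost-subadditivity with gluing. In this model an admissible strain must satisfy $\curl F = -\mathbf{b}\otimes\dot\gamma\,d\mathcal{H}^1\llcorner\Gamma$ with $\Gamma$ on the grid and Burgers vectors in $b\Z\{e_1,e_2\}$; if you interpolate between two near-minimizers on a transition layer you generically create a \emph{diffuse} curl, and the mismatch of two arbitrary boundary traces is not quantized, so it cannot be absorbed by ``a few extra dislocation loops'' with admissible Burgers vectors. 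There is also the issue you note that the cylinders have height $hr$, so the patching surface is $O(r)\times O(hr)$ and decay of the correctors away from $S_r$ would have to be proved for near-minimizers, not just for your explicit competitor. Likewise, your positivity argument via a cell-by-cell rigidity lower bound $E_{\alpha,r}\ge c_0 r^2$ is exactly the kind of local lower bound the paper's Remark 1.4 warns is delicate without regularity/separation assumptions on the dislocation lines, and it is not actually needed.

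The paper sidesteps all of this with a one-directional, gluing-free argument: for $r\le R$ (both in $b\N$) place $j^2=\lfloor R/r\rfloor^2$ disjoint translated copies of $\Omega_r$ inside $\Omega_R$ and \emph{restrict} a minimizer of $E_{\alpha,R}$ to each copy; restriction trivially preserves admissibility, so $j^2 E_{\alpha,r}\le E_{\alpha,R}$ with no error term. This gives $E_{\alpha,r}/r^2\le E_{\alpha,R}/(R^2+o(R^2))$, hence $\limsup_r E_{\alpha,r}/r^2\le\liminf_R E_{\alpha,R}/R^2$ and the limit exists; positivity is then immediate from $E_{\alpha,r_0}>0$ at any single scale (Proposition \ref{prop:positivo}), and finiteness from the explicit construction. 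So your outline would need the hard gluing and rigidity lemmas actually proved to close the gap, whereas the intended argument replaces both with the elementary restriction inequality. (General $r\notin b\N$ is then handled by a monotonicity-in-$r$ remark, again by restriction.)
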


\begin{proof} 
For the sake of computational simplicity, we assume that \eqref{simp} holds, so that $r/2\in b\N$ (see Remark \ref{rem-qualsiasi}). 
We first show that the limit exists. Let $m,\, n\in \N$ with $n>m$, and let $j$ be the integer part of $\frac{n}{m}$, $R:= nb$, $r:=mb$.  Then, there are $j^2$ disjoint squares of size $r$ in $S_{R}$, so there are $j^2$ disjoint sets equivalent to $\Om_r$ (up to horizontal translations) in $\Om_R$.
By minimality, $E_{\alpha,r}$ is smaller than the energy stored in each of such domains, so that 
$$
\frac{{E_{\alpha,r}}}{r^2} \le \frac{{E_{\alpha,R}}}{r^2j^2} = \frac{{E_{\alpha,R}}}{R^2 + q(r)}\, ,  
$$
where $q(r):= -\big[ (\frac{R}{r} - j)^2  +2j (\frac{R}{r} - j) \big] r^2 = o(R^2)$.
Since this inequality holds true for all $r,R\in b\N$ with $r \leq R$, we deduce that  
$$
\liminf_{n\to +\infty } \frac{E_{\alpha,bn}}{(bn)^2}= \limsup_{n\to +\infty } \frac{E_{\alpha,bn}}{(bn)^2} =\lim_{n\to +\infty } \frac{E_{\alpha,bn}}{(bn)^2}  = :E_{\alpha}.
$$

In order to establish that $E_{\alpha}>0$, it suffices to recall Proposition \ref{prop:positivo} and observe that $E_{\alpha,1}>0$.

%\[
%\liminf_{r \to +\infty} \frac{E_{\alpha,r}}{r^2} >0.
%\]
%Let $P:= (0,2b)\times(0,b)\times (-b,b)$, and let $E_{\alpha,r}(P)$ be de minimal energy induced in $P$, i.e., $E_{\alpha,r}(P)$ is defined according with \eqref{energia4} and \eqref{minen}, but with the domain of integration replaced by $P$. It is clear that $E_{\alpha,r}(P)>0$; moreover,  we can find $c r^2$ disjoint sets equivalent to $P$ (up to orizontal translations) contained in $\Om_r$, where $c>0$ is independent of $r$. We conclude that 
%$$
% \frac{E_{\alpha,r}}{r^2} \ge c  E_{\alpha,r}(P) r^2.
%$$ 
Next we show that $E_{\alpha}< +\infty$.
%\begin{equation} \label{tesi10}
%\limsup_{r \to +\infty} \frac{E_{\alpha,r}}{r^2} < +\infty.
%\end{equation}
For this purpose, we will exhibit a sequence of deformations and associated dislocations for which the energy grows at most quadratically in $r$.
The construction uses some ideas introduced in \cite{mp08} and \cite{mp}.
Let $\delta := \frac{b}{(\alpha-1)}$ and  recall that by \eqref{simp} we have $r/\delta \in \N$. 
Denote by 
$Q_i$, $i=1 , \dots, q$, the squares of side $\delta$ with vertices in the lattice $S_r \cap \delta \Z^2$,  
and let $x_i$ be
the center of each $Q_i$.
Since the side of $S_r$ is $r$, we have that
$q={(r/\delta)}^2$.

We will define a deformation $v \colon \Omega_r \to \rtre$ such that
$v=x$ in $\Omega^-_r$, $v=\alpha x$ if $x_3 >\delta$ and  the transition from $x$ to $\alpha x$ is 
distributed into constant jumps across the squares $Q_i$'s. In this way the energy will be concentrated 
in a $\delta$-neighbourhood of the interface $S_r$ and the contribution to the energy will come mostly from dislocations.

To this end, let $C_i^1$ and $C_i^2$ be the pyramids of base $Q_i$ and vertices
$x_i + \delta/2 \, e_3$ and $x_i+\delta e_3$ respectively.
Define a displacement $u \colon \Omega_r \to \rtre$ such that
 \[
 u(x)= \begin{cases}
 (\alpha-1) x   & \text{ if } x \in \Omega^+_r \smallsetminus \cup_{i=1}^q C_i^2  \\
 0                     & \text{ if } x \in \Omega^-_r \,.
 \end{cases}
 \]
We complete the above definition by setting $u:=u_i$ in $C^2_i$, where $u_i$ is the unique solution of 
the minimum problem
\begin{equation} \label{min prob pir}
\begin{aligned}
m_{\delta,p, (\alpha-1)I} := \min \left\{ \int_{C^2_i} \va{\nabla v}^p \, \colon \, v\in W^{1,p}_{loc}(\rtre_+), \,  v\equiv (\alpha-1)x_i \text{ in } C^1_i, \right. \\ 
\left. \phantom{\int_{C^2_i} }
v(x)=(\alpha-1) x \text{ in } \rtre_+\setminus C^2_i \right\},
\end{aligned}
\end{equation}
where $\rtre_+:= \rtre\cap\{x_3>0\}$.
Notice that $m_{\delta,p, (\alpha-1)I}$ is independent of $i$ and that $u$ is well defined; 
indeed if $Q_i$ and $Q_j$ are adjacent squares, i.e. 
\[
Q_j=Q_i \mp \delta e_s \qquad \text{for some} \quad s \in \{1,2\},
\]
then
\[
u_j(x)= u_i (x \pm \delta e_s) \mp (\alpha-1) \delta e_s   \quad \text{for every} \quad x \in Q_j \times [0,+\infty].
\]  
Moreover, 
in Proposition \ref{prop:doppio cilindro} we will show  that $0<m_{\delta,p, (\alpha-1)I}<+\infty$ and
 \begin{equation} \label{doppia pir}
 m_{\delta,p, (\alpha-1)I} = \delta^3 {(\alpha-1)}^p \, m_{1,p, I} \, .
 \end{equation}

%if $u_i$ and $u_j$ are the solutions of $P(C_i^2)$ and $P(C_j^2)$ respectively.

 % \delta^3 m_{1,p, (\alpha-1)I}
Set $v=x+u$.
Notice that the deformation $v$ has constant jump equal to $(\alpha-1)x_i$ across $Q_i$. Therefore, if $Q_i$ and $Q_j$ are adjacent and we set $\gamma_{ij}:=Q_i \cap Q_j$, we  have
that $\gamma_{ij}$ is a dislocation line with Burgers vector $\mathbf{b}_{ij}=(\alpha-1) (x_j-x_i)$ (see Figure \ref{fig:burgers}). 
By construction $\gamma_{i,j}$ lies in the grid $(b\Z \times \R) \cup (\R\times b\Z)$. Moreover, since  $\delta=b/(\alpha-1)$,
$\mathbf{b}_{ij} \in \pm b \{e_1,e_2\}$. 
Therefore, setting 
$\Gamma:=\{\gamma_{ij}\}$ and $\mathbf{B}:=\{\mathbf{b}_{ij}\}$, we  
have that $(\Gamma,\mathbf{B}) \in \AD$ and $v \in \AS$. 

We are left to estimate from below the elastic energy of $v$. 
\begin{figure} 
\centering   
\def\svgwidth{9cm}   
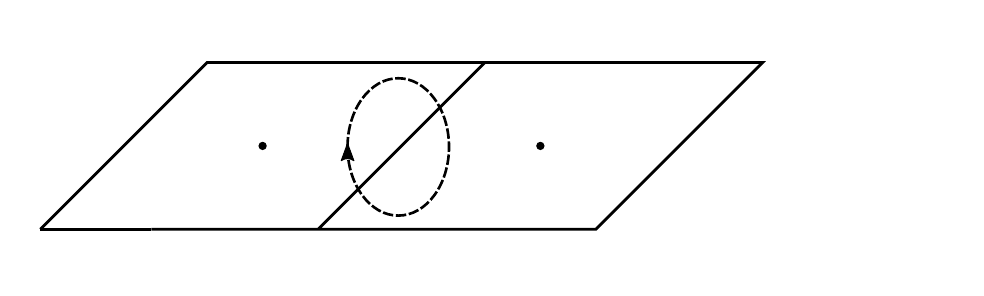  
\caption{A loop winding around $\gamma_{ij}=Q_i \cap Q_j$.}   
\label{fig:burgers}
\end{figure} 
Recalling that
$W(\alpha I)=0$,
the growth condition \eqref{growth1} and \eqref{doppia pir}, we get 

\[
\begin{aligned}
\int_{\Omega^+_r} W(\nabla v) & =\sum_{i=1}^q \int_{C_i^2} W(\nabla v) \leq C \sum_{i=1}^q \int_{C_i^2} (\va{\nabla v}^p +1) \\ 
& \leq C q \va{C_i^2} + q \delta^3 {(\alpha-1)}^p \, m_{1,p,I} =
 q \delta^3 \left( C+{(\alpha-1)}^p \, m_{1,p,I} \right)  \,.
\end{aligned}
\]
Writing $q=r^2/\delta^2$ and $\delta=b/(\alpha-1)$ yields

%\begin{equation} \label{tesi11}
%\int_{\Omega^+_r} W(\nabla v) \leq  r^2 \frac{b}{\alpha-1} \left[  \tilde{C} +  (\alpha-1)^{p} m\right]
%\end{equation}

%%

%
%\[
%\begin{aligned}
%\int_{\Omega^+_r} W(\nabla v) & =\sum_{i=1}^q \int_{C_i^2} W(\nabla v) \leq \tilde{C} \sum_{i=1}^q \int_{C_i^2} (\va{\nabla v}^p +1) \\ 
%& \leq \tilde{C} q \va{C_i^2} + q \delta^3 {(\alpha-1)}^p \, m =
% q \delta^3 \left(\tilde{C}+{(\alpha-1)}^p \, m \right)  \,.
%\end{aligned}
%\]
%Since $q=r^2/\delta^2$ and $\delta=b/(\alpha-1)$ we get

\begin{equation} \label{tesi11}
\int_{\Omega^+_r} W(\nabla v) \leq  r^2 b \left[ {(\alpha-1)}^{p-1} \, m_{1,p,I}  + {(\alpha-1)}^{-1} C \right]\,.
\end{equation}
\end{proof}
\begin{remark}\label{rem-qualsiasi}
{\rm
In the case when \eqref{simp} does not hold, it suffices to observe that
$$
E_{\alpha,[\frac{r}{2\delta}]2\delta} \leq E_{\alpha,r} \leq E_{\alpha,[\frac{r}{2\delta}]2\delta+2\delta}\quad \text{ and }\quad 
\lim_{r\to \infty}
\frac{([\frac{r}{2\delta}]2\delta)^2}{r^2}= 
\lim_{r\to \infty}
\frac{([\frac{r}{2\delta}]2\delta + 2\delta)^2}{r^2}
=1,
$$
where $[a]$ denotes the integer part of $a$.
The above inequalities follow from the fact that if $r_1<r_2$, then the restriction to $\Om_{r_1}$ of any test function 
for $E_{\alpha,r_2}$ provides a test function for 
$E_{\alpha,r_1}$.
}
\end{remark}

\begin{remark}{\rm
The asymptotic behavior of the energy described by Proposition \ref{prop:dislo} strongly relies 
on the structure assumption made on the admissible dislocation lines. 
Proving lower bounds on the energy can in general be a delicate problem if no structure assumption is made. 
In fact, local estimates of the energy can be obtained in a neighborhood  of the dislocation lines, 
as long as these are sufficiently regular and well separated.
} 
\end{remark}

As a corollary of Propositions \ref{prop:positivo} and \ref{prop:dislo} we obtain the following theorem, asserting that 
nucleation of dislocations is energetically convenient for sufficiently large values of $r$.

\begin{theorem} \label{thm:dislo favorevoli}
There exists a threshold $r^*$ such that, for every $r > r^*$, 
\[
%\min_{(\Gamma, \mathbf{B}) \in \AD} 
E_{\alpha,r}  < E_{\alpha,r} (\emptyset).
\] 

\end{theorem}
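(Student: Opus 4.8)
The plan is to combine the two scaling results already established — the cubic growth of the dislocation-free energy from Proposition \ref{prop:positivo}, and the quadratic growth of the misfit energy from Proposition \ref{prop:dislo} — and observe that a cubic quantity eventually overtakes a quadratic one. Concretely, by Proposition \ref{prop:positivo} we have $E_{\alpha,r}(\emptyset) = r^3\, E_{\alpha,1}(\emptyset)$ with $E_{\alpha,1}(\emptyset)>0$, so $E_{\alpha,r}(\emptyset)/r^2 = r\, E_{\alpha,1}(\emptyset) \to +\infty$ as $r\to+\infty$. On the other hand, Proposition \ref{prop:dislo} gives $E_{\alpha,r}/r^2 \to E_\alpha$ with $0<E_\alpha<+\infty$. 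Hence the ratio $E_{\alpha,r}(\emptyset)/E_{\alpha,r}$ tends to $+\infty$, and in particular there exists $r^*$ such that $E_{\alpha,r}(\emptyset) > E_{\alpha,r}$ for all $r>r^*$.

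First I would make the limit statement quantitative: since $E_{\alpha,r}/r^2 \to E_\alpha < +\infty$, there is $r_0>0$ and a constant $K>E_\alpha$ with $E_{\alpha,r} \le K r^2$ for all $r>r_0$. Then I would pick $r^* := \max\{r_0,\, K/E_{\alpha,1}(\emptyset)\}$; for $r>r^*$ one has
\[
E_{\alpha,r} \le K r^2 = \big(K/r\big)\, r^3 < E_{\alpha,1}(\emptyset)\, r^3 = E_{\alpha,r}(\emptyset),
\]
which is exactly the claimed strict inequality. Finally I would note that by definition $E_{\alpha,r} = \min\{E_{\alpha,r}(\Gamma,\mathbf B) : (\Gamma,\mathbf B)\in\AD\}$, so in particular the minimizing pair $(\Gamma,\mathbf B)$ must satisfy $\Gamma\cap S_r \ne \emptyset$ (otherwise $E_{\alpha,r} = E_{\alpha,r}(\emptyset)$, contradicting the strict inequality), which is the asserted energetic convenience of nucleating dislocations.

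There is no real obstacle here: the theorem is a soft corollary, and the entire content has been pushed into Propositions \ref{prop:positivo} and \ref{prop:dislo}. The only point requiring a sliver of care is that Proposition \ref{prop:dislo} establishes the limit along $r\to+\infty$ (through the subsequence $r = bn$ in its proof, but then extended to all $r$ via Remark \ref{rem-qualsiasi}), so one should invoke the full statement \eqref{liminfsu} rather than the subsequential version to obtain the bound $E_{\alpha,r}\le K r^2$ for \emph{all} large $r$. With that in hand the argument is a one-line comparison of $r^3$ against $r^2$.
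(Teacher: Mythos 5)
Your argument is correct and is precisely the one the paper intends: Theorem \ref{thm:dislo favorevoli} is stated as a direct corollary of Propositions \ref{prop:positivo} and \ref{prop:dislo}, comparing the cubic growth of $E_{\alpha,r}(\emptyset)$ against the quadratic bound on $E_{\alpha,r}$. Your quantitative version (choosing $K>E_\alpha$ and $r^*=\max\{r_0,K/E_{\alpha,1}(\emptyset)\}$) fills in the elementary details the paper leaves implicit.
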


%br/d + r = ra

%br/d = r(a-1)
%d = b/(a-1)

\subsection{Double pyramid construction}\label{piramidi}

Fix $\delta>0$ and let  $C^1$ and $C^2$
be the pyramids with common base the square $(-\delta/2,\delta/2)^2\times \{0\}$
and heights $\delta/2$ and $\delta$ respectively.
Note that $C^1 \subset C^2$. 
Set $S:=C^2 \cap \{\delta/2 < x_3 < \delta \}$ and
 $T:=(C^2 \smallsetminus C^1) \cap \{0 < x_3 < \delta/2\}$. See Figure \ref{fig:doppio cono} for a cross section of the construction in cylindrical coordinates.

\begin{figure} 
\centering   
\def\svgwidth{3.5cm}   
%\scalebox{0.5}
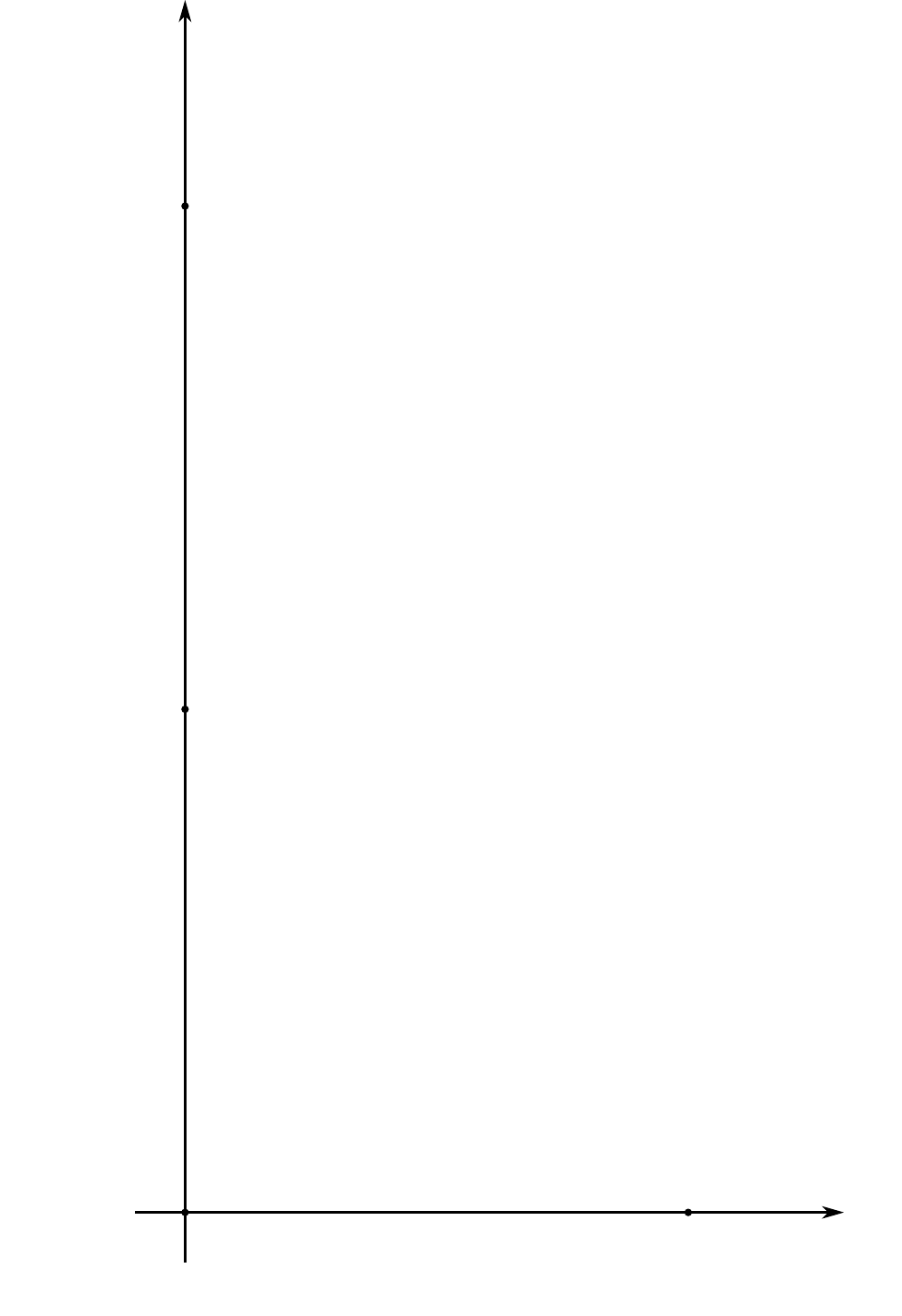  
\caption{Section $\f=0$ of the double pyramid.}   
\label{fig:doppio cono}
\end{figure} 

Let $A\in \matrici$ with $A\neq 0$, and consider the following minimization problem
\begin{equation}\label{mdpa}
m_{\delta, p, A}:= \min \left\{ \int_{C^2} \va{\nabla v}^p \, dx \, \colon \, v\in W^{1,p}_{loc}(\rtre_+), \,   v\equiv 0 \text{ in } C^1, \,  v\equiv A x \text{ in } \rtre_+\setminus C^2 \right\}, 
\end{equation}
where $\rtre_+:= \rtre\cap\{x_3>0\}$.

\begin{proposition} \label{prop:doppio cilindro}
The minimum problem \eqref{mdpa} is well posed. Moreover,
\begin{itemize} 
\item[i)] $0<m_{1, p, A}<+\infty$ for every $1<p<2$;
\item[ii)] 
%$m_{1, p, A}\to +\infty$ as $p\to 2^-$;
$m_{1, 2, A} = \infty$;
\item[iii)] for all positive $\delta$ and $\lambda$ we have
%\begin{equation} \label{rescaling2}
$m_{\delta, p, \lambda A} = \delta^3 \lambda^p  m_{1, p, A}$.
%m_{\delta,p,(\alpha-1)I} = {(\alpha-1)}^p \, 	m_{\delta,p,I} = \delta^3 {(\alpha-1)}^p \, m
%\end{equation}
%where we set $m:=m_{1,p,I}$. The constant $m$ depends only on $p$.
%
%$
% m_{\delta, p, A} = \delta^3  m_{1, p, A}.
% $

 \end{itemize}
% There exists a constant $C>0$ depending only on $p \in (1,2)$ and on the function $G$ such that 
%\begin{equation} \label{stima cono}
%\int_{C^2} \va{\nabla F}^p \leq C \delta^3 \,.
%\end{equation}
\end{proposition}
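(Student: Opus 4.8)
The plan is to settle the three items in order, essentially all of the work being the borderline behaviour of competitors near the tip $x_3=0$, where $C^1$ and $C^2$ are almost tangent. \emph{Well-posedness and positivity.} Since $p>1$, the map $v\mapsto\int_{C^2}\va{\nabla v}^p$ is convex, hence sequentially weakly lower semicontinuous on $W^{1,p}_{\rm loc}(\rtre_+)$, and the admissible class is convex and stable under weak $W^{1,p}_{\rm loc}$-convergence, being cut out by the affine constraints $v=0$ a.e.\ in $C^1$ and $v=Ax$ a.e.\ in $\rtre_+\smallsetminus C^2$ (Rellich gives a.e.\ convergence along a subsequence, so both constraints pass to the limit). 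A minimizing sequence has equibounded $L^p$-gradients on $C^2$ and equals $Ax$ off $C^2$, so, chaining Poincar\'e's inequality on overlapping balls out from the region where $v$ is pinned, a subsequence converges weakly in $W^{1,p}_{\rm loc}(\rtre_+)$ to an admissible limit, which is a minimizer provided the infimum is finite (established below for $p<2$). For positivity, note that if $\int_{C^2}\va{\nabla v}^p=0$ for some admissible $v$, then $\nabla v=0$ a.e.\ in the connected set $C^2$, so $v$ is constant there, and $v\equiv0$ in $C^1\subset C^2$ forces $v\equiv0$ in $C^2$; matching traces on $\partial C^2\cap\{x_3>0\}$ with the exterior value then makes $x\mapsto Ax$ vanish on a two-dimensional planar piece not passing through the origin, hence $A=0$, a contradiction. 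Therefore $m_{1,p,A}>0$.

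\emph{Finiteness for $p<2$.} It suffices to produce one competitor. The horizontal width of the frame $T$ at height $x_3$ being $x_3/2$, one can choose $\chi\colon\rtre_+\to[0,1]$, Lipschitz away from $\{x_3=0\}$, with $\chi\equiv0$ on $C^1$, $\chi\equiv1$ on $\rtre_+\smallsetminus C^2$ and $\va{\nabla\chi}\le C/x_3$ on $T$. Set $v:=\chi\,Ax$: it is admissible, $\nabla v=Ax\otimes\nabla\chi+\chi A$ vanishes on $C^1$, is bounded on $C^2\smallsetminus(C^1\cup T)$, and on $T$ satisfies $\va{\nabla v}\le\va{A}\,\va{x}\,\va{\nabla\chi}+\va{A}\le C\va{A}\delta/x_3+\va{A}$. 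Since $\mathcal H^2(T\cap\{x_3=t\})\le C\delta t$,
\[
\int_{C^2}\va{\nabla v}^p\,dx\ \le\ C+C\va{A}^p\delta^{p+1}\int_0^{\delta/2}t^{1-p}\,dt\ <\ +\infty ,
\]
the last integral converging precisely because $p<2$; taking $p=2$ replaces it by the divergent $\int_0^{\delta/2}t^{-1}\,dt$, which already points to ii).

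\emph{Infiniteness for $p=2$, the main obstacle.} Let $v$ be any admissible map. I would bound $\int_{C^2}\va{\nabla v}^2$ from below by slicing $T$ horizontally: for a.e.\ height $t\in(0,\delta/2)$ and a.e.\ segment $\ell\subset T\cap\{x_3=t\}$ crossing the frame orthogonally to one of its four sides, $v$ runs from $0$ on $\partial C^1$ to $Ax_\ell$ on $\partial C^2$ over a length $t/2$, so Cauchy--Schwarz gives $\int_\ell\va{\partial_n v}^2\ge\frac2t\va{Ax_\ell}^2$; integrating over the slice (handling each of the four sides with segments orthogonal to it) and then in $t$, Fubini yields
\[
\int_{C^2}\va{\nabla v}^2\,dx\ \ge\ \int_0^{\delta/2}\frac2t\,\Bigl(\int_{\partial C^2\cap\{x_3=t\}}\va{Ax}^2\,d\mathcal H^1\Bigr)\,dt .
\]
Since $A\neq0$, the inner integral tends as $t\to0$ to a positive multiple of the integral of $\va{Ax}^2$ over the square curve of side $\delta$ in $\{x_3=0\}$, hence is bounded below by some $c_A\delta^3>0$ for all small $t$; the outer integral then diverges, so every admissible map has infinite $2$-energy and $m_{1,2,A}=+\infty$.

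\emph{Scaling.} For iii), given an admissible $v$ for $m_{1,p,A}$, set $\widetilde v(x):=\delta\lambda\,v(x/\delta)$: then $\widetilde v\equiv0$ on $\delta C^1$, $\widetilde v(x)=\lambda Ax$ on $\rtre_+\smallsetminus\delta C^2$, $\nabla\widetilde v(x)=\lambda\,\nabla v(x/\delta)$, and the change of variables $y=x/\delta$ gives $\int_{\delta C^2}\va{\nabla\widetilde v}^p\,dx=\delta^3\lambda^p\int_{C^2}\va{\nabla v}^p\,dy$; as $v\mapsto\widetilde v$ is a bijection of the respective admissible classes, passing to infima yields $m_{\delta,p,\lambda A}=\delta^3\lambda^p\,m_{1,p,A}$. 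The one genuinely delicate step is the $p=2$ lower bound: one has to organise the slicing on the thin square frame and keep $\va{Ax}$ bounded away from $0$ along $\partial C^2$ uniformly as $x_3\to0$; everything else is the direct method, an explicit interpolation, and a dilation.
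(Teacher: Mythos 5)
Your proof follows essentially the same route as the paper's: the direct method for well-posedness, the same contradiction argument for positivity, an explicit competitor with $\va{\nabla v}\lesssim 1/x_3$ on the frame $T$ for finiteness when $p<2$ (the paper builds it by piecewise-linear interpolation in cylindrical coordinates rather than by a cutoff, which is immaterial), a one-dimensional slicing plus Cauchy--Schwarz/Jensen estimate producing the logarithmic divergence for $p=2$, and the identical dilation for iii). The only point worth flagging is that your lower bound $\int_{\partial C^2\cap\{x_3=t\}}\va{Ax}^2\,d\mathcal{H}^1\ge c_A\delta^3$ genuinely requires $A$ not to annihilate the horizontal plane (for $A=e_3\otimes e_3$ it fails and $m_{1,2,A}$ is in fact finite, so ii) as stated needs this restriction), but the paper's own Jensen estimate rests on exactly the same implicit assumption, and it is harmless for the matrices $A=\lambda I$ actually used in Proposition \ref{prop:dislo}.
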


\begin{proof}
The fact that the minimum problem is well posed is standard and based on the direct method of the calculus of variations. 

Property iii$)$ holds because if $v$ is a competitor for $m_{\delta,p, \lambda A}$, then
$\tilde{v}(x):= v(\delta x)/ \lambda \delta$ is a competitor for $m_{1,p,A}$.

As far as i) is concerned, first remark that $m_{1, p, A}>0$. Indeed, arguing by contradiction assume that  
%Let $v$ be a minimizer of \eqref{mdpa}. Notice that $v$ is regular. By contradiction suppose that 
$m_{1, p, A}=0$. Then, the minimizer $v$ would satisfy $\nabla{v} \equiv 0$ in $C^2$ and $\nabla{v} \equiv A$ in $\rtre_+ \setminus C^2$, which is 
a contradiction since this is only possible when $A =  0$. 

We will prove that $m_{1, p, A}<+\infty$ by exhibiting an admissible deformation $v$ with finite energy. 
In order to simplify the computations, we will show it in the case when $C^1$ and $C^2$ are the cones with base the disk of diameter $1$ and center the origin, and heights $1/2$ and $1$ respectively.
The estimate in the case of two pyramids can be proved in the same way, with minor changes.
% we will 
 %modify $G:= \nabla v$ in $C^2$ and define a new function $F \colon \{x_3 >0 \} \to \rtre$ such that $F=G$ outside of $C^2$. Set $P_0=(0,0,0)$ and define 
%$F(x):=G(P_0)$ for every $x \in C^1$. 

Introduce the cylindrical coordinates 
$x_1= \rho \cos \f$, $
x_2= \rho \sin \f$ and $x_3=z$,
with $\rho >0$ and $\f \in [0,2 \pi)$. Set $v:= 0$ in $C^1$ and
$v(x):=Ax$ in $\rtre_+ \setminus C^2$. 
First we extend $v$ to $S$.
%on $Q:= C^2\cap \{z\ge \frac12\}$. 
To this end, for all $\bar\f \in [0,2 \pi)$ we define $v$ 
in the triangle  $S_{\bar\f}:= S\cap\{\f=\bar\f\}$ by linear interpolation of the values of $v$ at the three vertices of $S_{\bar\f}$.  
Notice that $v$ is Lipschitz continuous in $S$. 
Next, we extend $v$  to $T:=C^2\setminus (S\cup C^1)$. 
For this purpose, for all $\bar\f \in [0,2 \pi)$ and $\bar z\in(0,\frac12)$  consider the segment  $L_{\bar\f,\bar z}:= T\cap \{\f=\bar \f\}\cap\{z=\bar z\}$, and  define $v$ on  
$L_{\bar\f,\bar z}$ by linear interpolation of the values of $v$ on the two extreme points of  $L_{\bar\f,\bar z}$.

We will now estimate the $L^p$ norm of $\nabla v$ in $C^2$. Since
 $v$ is piecewise Lipschitz in $C^2 \setminus T$, we only have to compute the energy in $T$.
By construction we have that 
\begin{equation}\label{stime} 
 |\nabla v(x,y,z)| \le \frac{c}{ z} \qquad \text{ for all } (x,y,z) \in T,
\end{equation}
where $c$ are suitable positive constant depending only on $A$. 
A straightforward computation yields $m_{1,p,A} \leq C(p,A)$ with the constant $C$ depending only on $A$ and $p$, and diverging as $p \to 2^{-}$, 
which proves i).

Finally, let us prove that $m_{1,2,A}=+ \infty$. 
%We will do it in the above framework. 
%Let $v$ be such that
%$\nabla v \equiv 0$ in $C^1$ and $\nabla v \equiv A$ in $\rtre_+ \setminus C^2$. 
%We can assume that $v \geq c >0$ in $T$.
%Then 
For every admissible function $v$ and all  $0< \e  < 1/2$, by Jensen's inequality we have
\begin{equation*} 
%\label{espl}
\int_{T \cap \{ \e < z < \frac12     \}} |\nabla v|^2 \, dx \ge   
\int_{T \cap \{ \e < z < \frac12     \}} \va{ \frac{\partial v}{\partial \rho}}^2 \, dx \ge
c \int_{\e}^{\frac12} \frac{1}{s} {\left( \int_{T\cap \{z=s\}} \frac{\partial v}{\partial \rho} \, d \rho   \right)  }^{2} \, ds \geq 
c \log \frac{1}{\e} \,.
\end{equation*}

%because
%\[
%\int_{T \cap \{ \e < z < \zeta     \}} \va{ \frac{\partial v}{\partial \rho}}^2 = 2 \pi \int_{\e}^{\zeta} \int_{\rho '}^{\rho ''} \rho   \va{ \frac{\partial v}{\partial \rho}}^2 \, d \rho d z \geq 2 \pi \left( \frac{1}{2} - \zeta \right) \int_{\e}^{\zeta} \frac{2}{z} {\left( \int_{\rho '}^{\rho ''} \frac{\partial v}{\partial \rho} \, d \rho   \right)  }^{2} \, dz
%\]
%by Jensen's inequality. 
Taking the limit $\e \to 0$ yields 
$\int_{C^2} \va{\nabla v}^2 = + \infty$.
\end{proof}

%\begin{remark}
%Notice that if $A=(\alpha-1)I$ then we have the trivial rescaling
%\begin{equation} \label{rescaling2}
%m_{\delta,p,(\alpha-1)I} = {(\alpha-1)}^p \, 	m_{\delta,p,I} = \delta^3 {(\alpha-1)}^p \, m
%\end{equation}
%where we set $m:=m_{1,p,I}$. The constant $m$ depends only on $p$.
%\end{remark}

\subsection{Some considerations on the proposed model}\label{conjr}

In the construction illustrated in the proof of Proposition \ref{prop:dislo},
$v(S_r)$ is union of disjoint squares of size $\delta$, separated by strips of width $b$; dislocation lines lie in the middle of such strips (see Fig. \ref{lungh}). 
Note that some lines of atoms (in the deformed configuration) fall out of   $S_r$ (see Figure \ref{nonf}), suggesting that 
%the amount of atoms to  match at the interface is, in some sense, excessive. 
 the chosen reference configuration is not convenient to describe heterostructured nanowires, 
 or epitaxial growth.
\begin{figure}[h]
\begin{center}
{\includegraphics[scale=0.35]{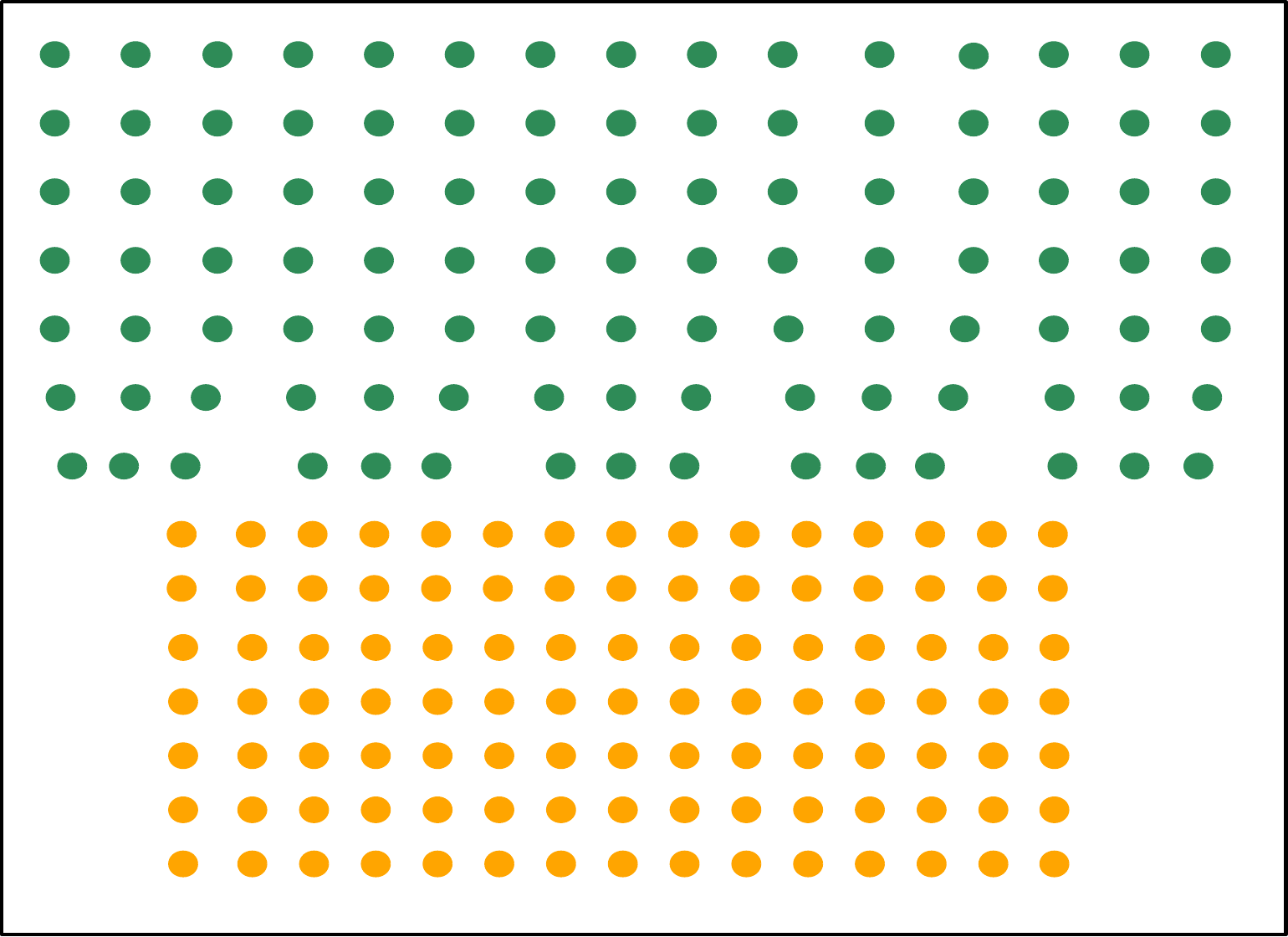}}
\caption{\label{nonf} A 2D discrete cross section of the crystal, deformed 
according to the construction made in the proof of Proposition \ref{prop:dislo}.}   
\end{center}
%\label{arancione}
\end{figure}

%However, dislocation  lines falling out of $S_r$ 
%should be considered as a source of surface energy rather than line defects.  
In fact, this is not the physical configuration we are interested in modeling and analyzing: The semi-discrete model presented in this section is somehow  
meant as a theoretical background to derive material constants, and in particular the energy per unit dislocation length and interface area, that will be 
involved in the model discussed in Section \ref{contmod}.

%This suggests that 
%in the modelling of the problem one should 

%In order to discuss about the energy cost per dislocation length, we have to clarify wether we measure lengths in the reference or in the deformed configuration. For this purpose, notice that, in our model, part of the dislocation lines (in the deformed configuration) fall out of   $S_r$. 
%Clearly, reasoning in terms of  number of nearest neighbors,  dislocation  lines falling out of $S_r$ should be considered as a source of surface energy rather than line defects. For such configurations it seems hard to guess  meaningful notions for the dislocation length and  for the energy per dislocation length.  
%In fact, this is not the physical configuration we are interested in modeling and analyzing: The semi-discrete model presented in this section is somehow  
%meant as a theoretical background to derive material constants, and in particular the energy per unit dislocation length and interface surface, that will involved in our continuous model for dislocations discussed in Section \ref{contmod}.

In order to avoid configurations like in Figure \ref{nonf}, in the next section we will rather modify our point of view, dealing with a reference configuration 
$\Omega_{R,r}:=\Omega_{R}^- \cup S_{r} \cup \Omega_{r}^+$ with $r:= \theta R$ for some $0<\theta<1$ (see Figure \ref{fig:ref2}), and enforcing that $v(S_r) =S_R$, thus describing  
a perfect mach between the two parts of the crystal as in Figure \ref{freedisloc}. The new parameter
$\theta$ represents the  ratio between the size of $S_r$ and that of its deformed counterpart $v(S_r)$. 
Optimization over $\theta$ corresponds to ``getting rid'' of unnecessary atoms at the interface 
and will yield $\theta \approx  \alpha^{-1}$ in the limit $r\to \infty$.

In this context it is quite natural to measure the dislocation length in the deformed configuration $v(\Omega_{r}^+)$. 
In the construction made in the proof of Proposition \ref{prop:dislo},
the number of  dislocation straight-lines  is of the order $\frac{2r}{\delta} $, where $\delta=\frac{b}{\alpha -1}$.
Mimicking the same construction in the new reference configuration $\Omega_{R,r}$, in order to enforce $v(S_r) =S_R$, now we have to choose $\delta=\frac{b}{\theta^{-1} -1}$.
The total length $L$ of dislocations (in the deformed configuration) is then of the order $L= \frac{2r^2} { b} (\theta^{-2} -\theta^{-1})$. 
The above formula can be obtained alternatively as follows. Let  $\tilde L$ be the total length of dislocations in the reference configuration. Then, $b \tilde L$  corresponds to  the 
total variation of the curl of the deformation strain in  $\Omega_{r}^+$. Since the jump of the strain across the interface is $(\theta^{-1}-1) I$, 
the modulus of  the curl  is equal to $2 (\theta^{-1}-1)$.  We deduce
$$
L = \theta^{-1} \tilde L = \frac{\theta^{-1}}{b} r^2 2 \left(\theta^{-1}-1\right) = \frac{2r^2} { b} (\theta^{-2} -\theta^{-1}).
$$ 

We are interested in small misfits $\theta^{-1} \approx 1$. Therefore,  $(\theta^{-2} -\theta^{-1})\approx \frac12 (\theta^{-2} -1)$, so that 
the  total length of dislocations is  of the order 
$$
L= \frac{1} { b} r^2(\theta^{-2} -1) = \frac{1}{b} \text{ Area Gap},
$$
where 
Area Gap, in a continuous modeling of the crystal, represents the difference between the area of the base of the deformed configuration $ v(\Om^+_r) = S_R$ of $\Om^+_r$, and the area of the base of the reference configuration, namely the area of $S_r$ (see Figure \ref{lungh} and Figure \ref{fig:ref2}).  

\begin{figure}[h]
\begin{center}
{\includegraphics[scale=0.40]{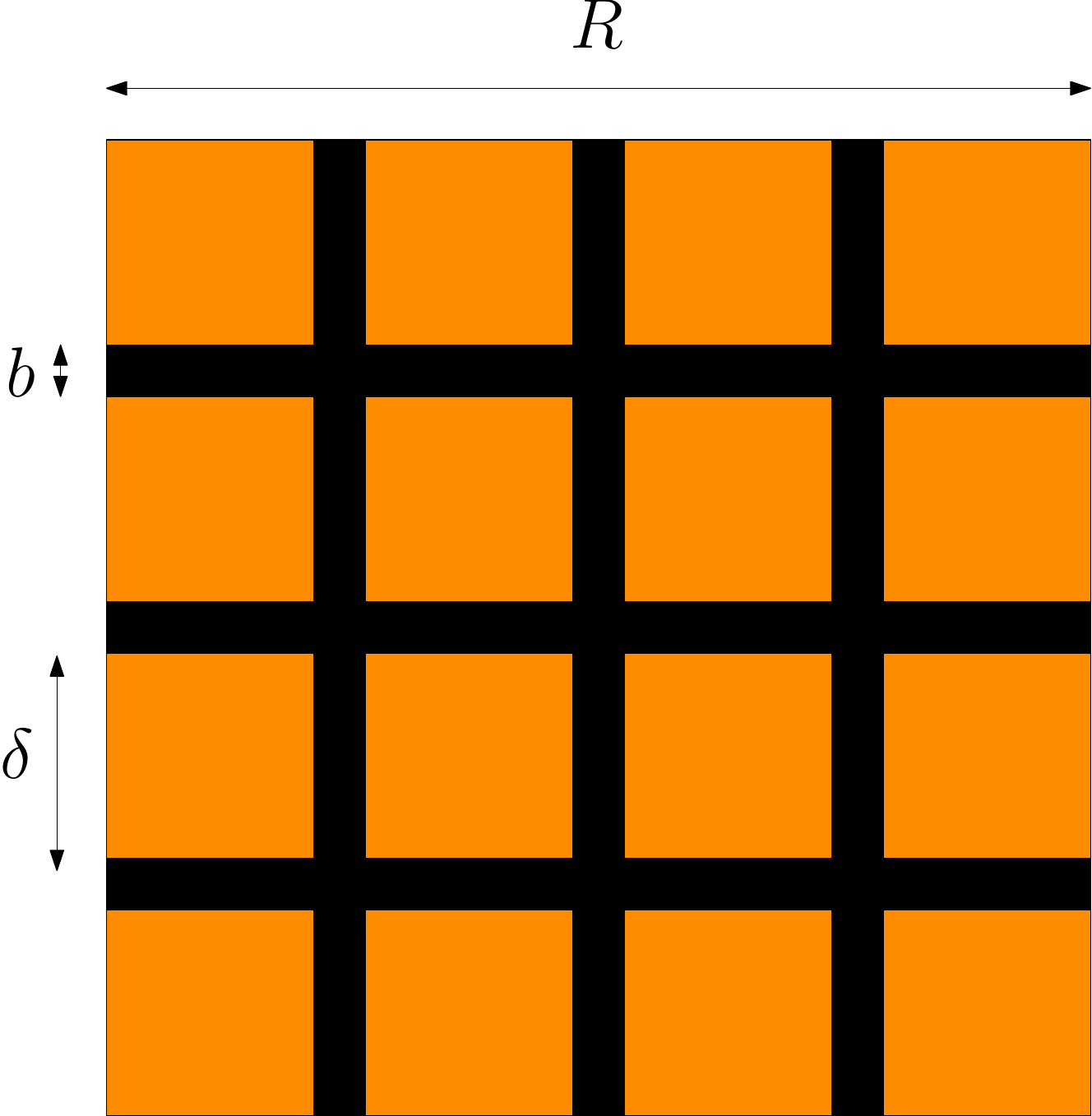}}
\caption{\label{lungh} The deformed configuration $v(S_r)$. The black strips represent dislocations. The total dislocation length is given, in first approximation, by the area of the black region divided by $b$.}   
\end{center}
%\label{arancione}
\end{figure}

We do not claim that our constructions are optimal in energy. Nevertheless, we believe that, as $r\to \infty$, 
 the optimal configuration of dislocations exhibit some periodicity. As a matter of fact, in Proposition \ref{prop:dislo} we have proved  that 
\begin{equation}\label{conj0}
E_{\alpha,r} \approx \sigma_{\alpha,\theta}  \text{ Area Gap} \qquad \text{ as } r\to +\infty,
\end{equation}
for  
%$\sigma_{\theta} := E_{\alpha}/ (\theta^{-2} -1 ) $. 
%As $r\to \infty$ we deduce
$$
\sigma_{\alpha,\theta} = \frac{E_{\alpha}}{\theta^{-2}-1}. 
$$
 In view of the considerations above, this reflects that the energy is proportional  to the total dislocation length.  
In particular,  as $r\to \infty$ and  $\alpha \to 1^+$, we expect that  $E_{\alpha,r}$ is minimized by a periodic configuration of dilute and well separated dislocations. 
 %case, it is reasonable to guess that the energy is proportional to the number $N_{r,\alpha}$ of such dislocations, which scales as $r^2(\alpha^2-1)$. 
Taking this into account, we expect  that  
%\begin{equation}\label{conj}
%\lim_{ \theta \to 1^-}  \frac{E_{\theta^{-1}}}{\theta^{-2}-1} = \lim_{ \theta \to 1^-}  \sigma_{\theta^{-1},\theta} =:\sigma,
%\end{equation}
\begin{equation}\label{conj}
\lim_{ \alpha \to 1^+}  \frac{E_{\alpha}}{\alpha^2-1} = \lim_{ \alpha \to 1^+}  \sigma_{\alpha,\alpha^{-1}} =:\sigma,
\end{equation}
for some $0<\sigma<\infty$, where $b \sigma $ represents the self energy of a single dislocation line per unit length.

Let us compare the nonlinear energy induced by dislocations with the solid framework of linearized elasticity. It is well known that the energy per unit (edge dislocation) length  in a single crystal of size $r$ is given by $b^2 \frac{\mu}{4\pi (1-\nu)} \ln(\frac{r}{b})$ (see, e.g., \cite{HL,NAB}), where $\mu$ is the shear modulus and $\nu$ is the Poisson's ratio. Here, $r$ should be replaced by the average distance between dislocations, and an extra prefactor 2 appears due to the fact that $\Om^-_R$ is rigid, so that the Burgers circuit is indeed half circuit. The resulting energy per unit dislocation length is then   
\begin{equation}\label{Endislo}
\gamma^{lin}:= b^2 \frac{\mu}{2\pi (1-\nu)} \ln\left(\frac{1}{\alpha-1}\right).
\end{equation}
To such energy, a chemical core energy $\gamma^{ch}$ per unit dislocation length should be added. Notice that this contribution is already present in our nonlinear formulation, 
and it is stored in the region where $|\nabla v|$ is large, and the energy density $W(\nabla v)$ behaves like $|\nabla v|^p$. We deduce  that, for small misfits,
$$
(\gamma^{lin} + \gamma^{ch}) \frac{1}{b} \text{ Area Gap } \approx E_{\alpha,r} \approx \sigma  \text{ Area Gap}, 
$$
which yields the following expression for $\sigma$:
\begin{equation}\label{sigma?}
\sigma = b \frac{\mu}{2\pi(1-\nu)} \ln \left(\frac{1}{\alpha-1}\right) + \frac{1}{b} \gamma^{ch}.
\end{equation}
Finally, we notice that $\sigma(\alpha^2-1)$ is noting but the energy per unit surface area, so that the total energy is given by
$$
 E_{\alpha,r} \approx  r^2 (\alpha^2-1)\left( b \frac{ \mu}{2\pi(1-\nu)} \ln \left(\frac{1}{\alpha-1}\right) + \frac{1}{b} \gamma^{ch}\right).
$$ 
\section{A simplified continuous model for dislocations}\label{contmod}
Based on the analysis and the considerations on the semi-discrete model discussed in Section \ref{mod1} (see Subsection \ref{conjr}), here we want to propose a simplified and more realistic model for dislocations at interfaces. Instead of working with $\rm SBV$ functions with piece-wise constant jumps on the interface, we will allow only for regular jumps but we will introduce a penalization to the elastic energy, that will represent the dislocation energy.
%We now want to propose an alternative model for the previous physical situation. 
\subsection{The simplified energy functional}\label{wdoppio}
Fix $\alpha>1$, $R>0$, $\theta \in [\alpha^{-1},1]$ and set $r:=\theta R$.  Let 
$\Omega^-_R:=S_R \times (-hR,0)$, where 
$S_R \subset \rdue$ is the square of side length $R$ centered at the origin and $h>0$ a fixed height. 
%\hidden{dove si usa che la parte inferiore e' lunga $hR$? credo mai}
Define now the reference configuration (see Figure \ref{fig:ref2}),
$$
\Omega_{R,r}:=\Omega_{R}^- \cup S_{r} \cup \Omega_{r}^+.
$$
 \begin{figure}[h] 
\centering   
\def\svgwidth{6cm}   
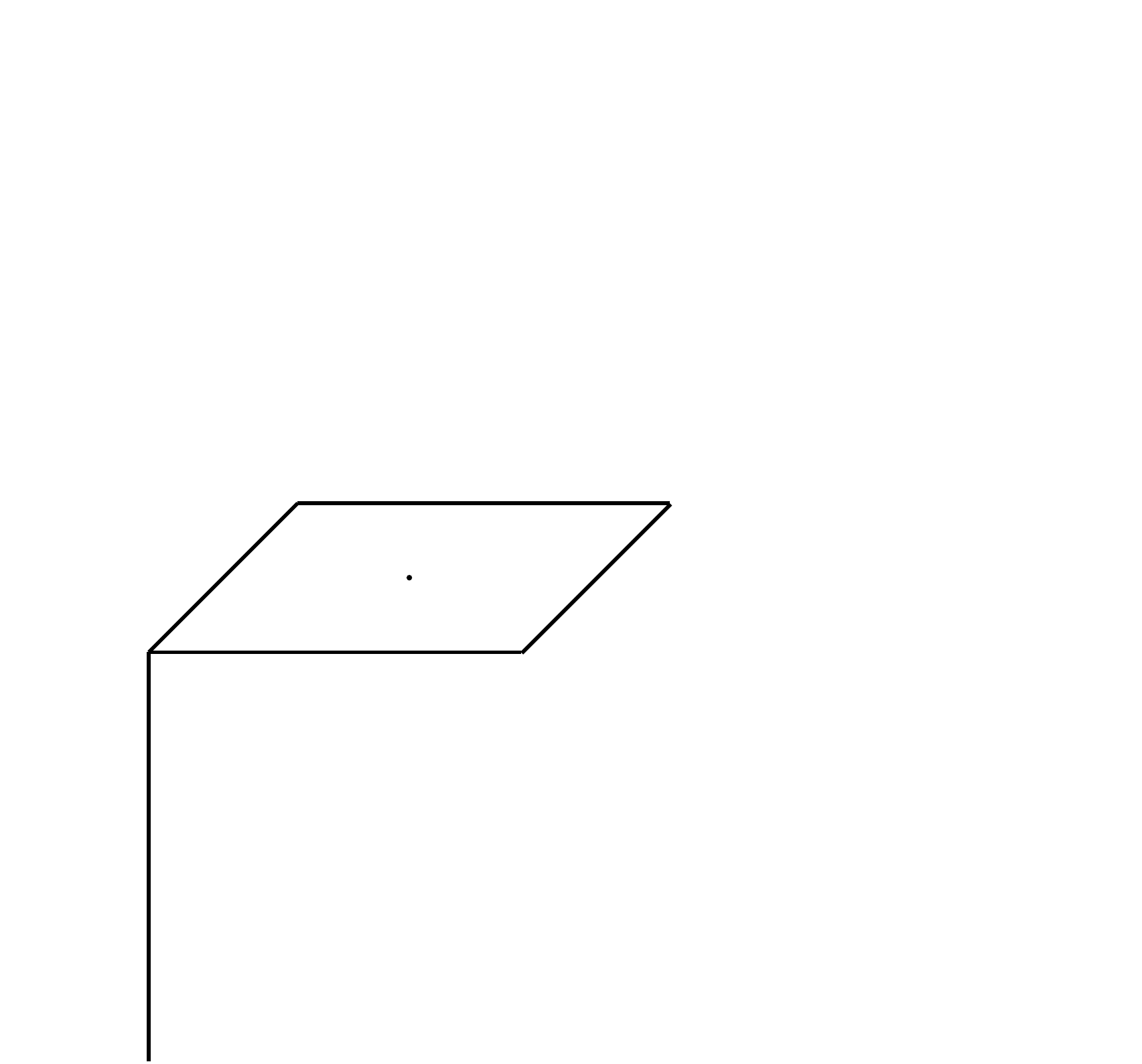  
\caption{Reference configuration $\Omega_{R,r}$\,.}   
\label{fig:ref2}
\end{figure} 
As in Section \ref{mod1} we will suppose that $\Omega^-_R$ is rigid 
 and that $\Omega^+_r$ is in equilibrium with $\alpha I$. We assume that there exists an energy density $W \colon \matrici \to [0,+\infty)$ that is continuous, $C^2$ in a neighborhood of $SO(3)$ and frame indifferent (see \eqref{frame ind}). Furthermore we suppose that 
 \begin{equation} \label{nulla}
W(\alpha I) =0
\end{equation}
and that for every $F \in \matrici$
\begin{equation} \label{growth2}
C_1   \,   \dist^2 (F, \alpha \rotazioni )  \leq W(F)  
%\leq C_2 \,  \dist^2 (F,\alpha \rotazioni ) \, .
\end{equation}
for some constant $C_1>0$.
The class of admissible deformation maps is defined by 
%\textcolor{red}{in realta' al pedice andrebbe r piccolo, altrimenti non si capisce cosa succede quando si riscala (l'energia elastica 
%riscala per $r=R \theta$).}
\begin{equation}
\ADM_{\theta,R} :=\left\{v\in W^{1,2}(\Omega^+_{r}; \rtre) \, \colon \,  v (x) = \frac{1}{\theta} \, x \, \text{ on }  S_{ r} \right\}.
\end{equation}
%A deformation will be a map $v \colon \Omega_{\theta r} \to \rtre$
%such that
%\[
%v(x)= \frac{1}{\theta} \, x  \quad \text{ if } \quad x \in S_{\theta r} \,.
%\]
In this way $v(S_{ r})=S_R$ for all $v\in \ADM_{\theta, R}$. 
A deformation $v \in \ADM_{\theta,R}$ stores an elastic energy 
\[
E^{el}_{\alpha,R}(\theta,v) := \int_{\Omega_{r}^+ }  W (\nabla v (x)) \, dx \,  .
\] 
To this energy we add
a dislocation energy $E^{pl}_{R}(\theta)$ proportional
to the area of $S_R \smallsetminus S_{ r}$, which in our model represents the total dislocation length, 
\begin{equation}\label{eplastic}
E^{pl}_{R}(\theta):= \sigma r^2 ({\theta}^{-2}-1) = \sigma R^2 (1-{\theta}^2).
\end{equation}
Here $\sigma >0$ is a given constant, which in our model is a material property of the crystal, representing (multiplied by $b$) the energetic cost of dislocations per unit length.  
In principle, $\sigma$ could be derived starting from the semi-discrete model discussed in Section~\ref{mod1}: since we will see that the optimal $\theta$ tends to 
$\frac{1}{\alpha}$ as $R\to\infty$, and we are interested in small misfits $\alpha\approx 1$, it is consistent to choose $\sigma$ according to \eqref{conj}, so that $b \sigma$ represents the energy induced by a single dislocation line per unit length. 
We are thus led to study the energy functional 
\[
\tot (\theta,v):= E^{el}_{\alpha,R}(\theta,v)  + E^{pl}_{R}(\theta) =  \int_{\Omega_{r}^+ }  W (\nabla v (x)) \, dx + \sigma R^2 (1-{\theta}^2) 
\]
We further define
\begin{equation}
\el (\theta):= \inf \left\{E^{el}_{\alpha,R}(\theta,v)  \, \colon \,   v \in \ADM_{\theta, R} \right\}  , \qquad \tot (\theta) := \el (\theta) + E^{pl}_{R}(\theta).
\end{equation}
%where
%\[
%\el (\theta):= \inf \left\{\int_{\Omega_{r}^+ }  W (\nabla v (x)) \, dx \, \colon \,   v \in \ADM_{\theta, R} \right\}  \,.
%\]
Notice that if $\theta=1$ then no dislocation energy is present, i.e. $\tot (1)= \el (1)$. Instead, if 
$\theta= \alpha^{-1}$ no elastic energy is stored (since $v(x):= \alpha x$ is admissible and $W(\alpha I)=0$) (see Figure \ref{freedisloc}). 
%We also have:

\begin{proposition} \label{prop:cubic}
The elastic energy $\el (\theta)$ satisfies
\begin{itemize}
\item[i)] $\el(\theta) = R^3 \theta^3  E^{el}_{\alpha,1}(\theta)$
\item[ii)] $E^{el}_{\alpha,1}(\theta)>0$ if and only if $\theta > \alpha^{-1}$.
\end{itemize}
\end{proposition}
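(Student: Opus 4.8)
The plan is to prove i) by rescaling the reference configuration and ii) by combining the lower growth bound \eqref{growth2} with the quantitative geometric rigidity estimate, plus an elementary rigidity check on the boundary datum.

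For i), I would fix $v\in\ADM_{\theta,R}$, so $v\in W^{1,2}(\Omega_r^+;\rtre)$ with $r=\theta R$ and $v(x)=\theta^{-1}x$ on $S_r$, and set $w(z):=r^{-1}v(rz)$ for $z$ in the unit reference configuration $\Omega_1^+=r^{-1}\Omega_r^+=S_1\times(0,h)$. Then $w\in W^{1,2}(\Omega_1^+;\rtre)$, $\nabla w(z)=\nabla v(rz)$, and $w(z)=\theta^{-1}z$ on $S_1$, so $v\mapsto w$ is a bijection between $\ADM_{\theta,R}$ and the class $\{w\in W^{1,2}(\Omega_1^+;\rtre)\colon w(x)=\theta^{-1}x \text{ on } S_1\}$, whose energy infimum is $E^{el}_{\alpha,1}(\theta)$; its inverse is $w\mapsto r\,w(\cdot/r)$. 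The change of variables $x=rz$ gives $\int_{\Omega_r^+}W(\nabla v)\,dx=r^3\int_{\Omega_1^+}W(\nabla w)\,dz$, and passing to the infimum over both classes yields $\el(\theta)=r^3E^{el}_{\alpha,1}(\theta)=\theta^3R^3E^{el}_{\alpha,1}(\theta)$, which is i).

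For ii), the easy implication is that when $\theta=\alpha^{-1}$ the affine map $v(x)=\alpha x=\theta^{-1}x$ lies in the admissible class and has energy $\int_{\Omega_1^+}W(\nabla v)=W(\alpha I)=0$ by \eqref{nulla}, so $E^{el}_{\alpha,1}(\alpha^{-1})=0$; since $\theta\in[\alpha^{-1},1]$ it remains to prove $\theta>\alpha^{-1}\Rightarrow E^{el}_{\alpha,1}(\theta)>0$. I would argue by contradiction: assuming $E^{el}_{\alpha,1}(\theta)=0$, pick admissible $v_k$ with $\int_{\Omega_1^+}W(\nabla v_k)\to 0$; by \eqref{growth2}, $\dist^2(\nabla v_k,\alpha SO(3))\to 0$ in $L^1(\Omega_1^+)$. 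Applying the geometric rigidity estimate to $v_k/\alpha$ on the bounded, connected, Lipschitz domain $\Omega_1^+$ produces rotations $Q_k\in SO(3)$ with $\nor{\nabla v_k-\alpha Q_k}_{L^2(\Omega_1^+)}^2\le C\int_{\Omega_1^+}\dist^2(\nabla v_k,\alpha SO(3))\to 0$. Since $SO(3)$ is compact, $Q_k\to Q\in SO(3)$ along a subsequence, hence $\nabla v_k\to\alpha Q$ in $L^2(\Omega_1^+;\matrici)$.

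Finally, because $v_k-\theta^{-1}x$ vanishes on the face $S_1\subset\partial\Omega_1^+$, which has positive surface measure, the Poincar\'e--Friedrichs inequality bounds $v_k$ in $W^{1,2}(\Omega_1^+;\rtre)$; extracting a weak limit $v$ one gets $\nabla v=\alpha Q$ a.e., so $v(x)=\alpha Qx+c$, and by weak continuity of the trace $v(x)=\theta^{-1}x$ on $S_1$, i.e.\ $\alpha Qx+c=\theta^{-1}x$ for all $x=(x_1,x_2,0)\in S_1$. Taking $x=0$ gives $c=0$, and then $\alpha Qe_1=\theta^{-1}e_1$, whence $\theta^{-1}=\alpha\va{Qe_1}=\alpha$, contradicting $\theta>\alpha^{-1}$. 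The main obstacle is exactly this nonconvex rigidity step: since the zero set $\alpha SO(3)$ of $W$ is not convex, weak lower semicontinuity of $v\mapsto\int W(\nabla v)$ is not available for free, and one genuinely needs the quantitative geometric rigidity estimate to upgrade the $L^1$-smallness of $\dist^2(\nabla v_k,\alpha SO(3))$ into compactness of $\nabla v_k$ around a single well $\alpha Q$ that must then be forced to be incompatible with the prescribed datum on $S_1$ unless $\theta=\alpha^{-1}$.
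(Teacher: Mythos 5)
Your proof is correct and follows essentially the same route as the paper: property i) by the rescaling $v\mapsto v(r\,\cdot)/r$, and property ii) by combining the lower bound \eqref{growth2} with the geometric rigidity estimate and the compactness of $SO(3)$ to force $\nabla v_k\to\alpha Q$, then showing the boundary datum $\theta^{-1}x$ on $S_1$ is incompatible with an affine limit $\alpha Qx+c$ unless $\theta^{-1}=\alpha$. The only cosmetic difference is that the paper passes to the trace via a Poincar\'e inequality after subtracting the mean $\zeta_n$, rather than extracting a weak $H^1$ limit as you do; both steps are standard and equivalent.
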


\begin{proof}
Property i$)$ follows by noticing
that if $v$ is in $\ADM_{\theta,R}$, then $\tilde{v}(x):= v(R \theta x)/R \theta$ is in $\ADM_{\theta,1}$. For the second property,
we have to prove that $E^{el}_{\alpha,1}(\theta)=0$ if and only if $\theta = \alpha^{-1}$.
We already pointed out that $E^{el}_{\alpha,1}(\alpha^{-1})=0$. 
Suppose that $E^{el}_{\alpha,1}(\theta)=0$. Then there exists a sequence $v_n \in H^1(\Omega_1^+;\rtre)$ such that $\su{v_n}{S_1}=\theta^{-1} \, x$ and
\begin{equation} \label{limite}
\int_{\Omega_1^+} W(\nabla v_n) \, dx \to 0 \quad \text{ as }  n \to \infty \,.
\end{equation}
The Rigidity
Theorem \ref{thm:rig}, the growth assumption \eqref{growth2} and the compactness of $\rotazioni$ 
in combination with \eqref{limite} imply that there exists a fixed rotation
$R \in \rotazioni$ such that (up to subsequences)
%for every fixed $n$ there exists a rotation
%$R_n \in \rotazioni$ such that
%\begin{equation}\label{limite3}
%\int_{\Omega_1^+} \va{\nabla v_n - \alpha R_n}^2 \leq C \int_{\Omega_1^h} \dis{\nabla v_n} \,.
%\end{equation}
%Hence by \eqref{limite} and Lemma \ref{lemma1}
%\begin{equation} \label{limite3bis}
%\int_{\Omega_1^h} \va{\nabla v_n - \alpha R_n}^2  \to 0 \quad \text{ as }  n \to \infty \,.
%\end{equation}
%Since $\rotazioni$ is compact, we have
%$R_n \to R$ (up to extracting a subsequence). Using that $\Omega_1^h$ is bounded we then have
\begin{equation*} 
%\label{limite4}
\int_{\Omega_1^+} \va{\nabla v_n - \alpha R}^2 \, dx \to 0 \quad \text{ as }  n \to \infty \,.
\end{equation*}
Setting
$\zeta_n := (1/\va{\Omega_1^+}) \int_{\Omega_1^+} (v_n(x) - \alpha R x)$,
from the Poincar\'e  inequality and the trace theorem we deduce that
\begin{equation} \label{limite6}
\int_{S_1}\va{v_n - \alpha R x - \zeta_n}^2 \, dx \to 0 \quad \text{ as }  n \to \infty \,.
\end{equation}
Since $v_n = \theta^{-1}x$ on $S_1$, \eqref{limite6} yields 
%(up to subsequences) 
\begin{equation}\label{limite7}
 (\theta^{-1}I - \alpha R)x - \zeta_n \to 0 \quad \text{ in } L^2(S_1) \,,
\end{equation}
which implies $\zeta_n \to 0$, $R=I$ and $\theta= \alpha^{-1}$.
%In particular we have (up to subsequences)  a.e. convergence. 
%Testing \eqref{limite7} around  $x=0$ yields $\zeta_n \to 0$, while 
%testing \eqref{limite7} around $x={e}_1$ and $x={e}_2$ and using that $R \in \rotazioni$
%it is immediate to see that $R=I$ and $\theta= \alpha^{-1}$.
 \end{proof}

In analogy with Theorem \ref{thm:dislo favorevoli},  we find that 
for $R$ sufficiently large dislocated configurations are energetically preferred.

\begin{theorem} \label{thm:dislo favorevoli2}
There exists a threshold $R^*$ such that, for every $R > R^*$
\begin{equation} \label{fav}
\inf_{\theta \in [\alpha^{-1},1)} \tot (\theta) <  \tot (1)	=\el(1) \, .
\end{equation}
\end{theorem}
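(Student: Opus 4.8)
The plan is to exhibit a single competitor $\theta$ whose total energy is of strictly lower order in $R$ than $\tot(1)$. The natural choice is $\theta=\alpha^{-1}$, which is the left endpoint of the admissible range $[\alpha^{-1},1)$ and hence a legitimate competitor. For this value the elastic energy vanishes: the map $v(x):=\alpha x$ belongs to $\ADM_{\alpha^{-1},R}$ and $W(\alpha I)=0$ by \eqref{nulla}, so $\el(\alpha^{-1})=0$ (this is also part ii) of Proposition \ref{prop:cubic}). Consequently, recalling $E^{pl}_R(\theta)=\sigma R^2(1-\theta^2)$,
\[
\inf_{\theta\in[\alpha^{-1},1)}\tot(\theta)\ \le\ \tot(\alpha^{-1})\ =\ \el(\alpha^{-1})+E^{pl}_R(\alpha^{-1})\ =\ \sigma R^2\,(1-\alpha^{-2}),
\]
which grows only quadratically in $R$.

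On the other hand $\tot(1)=\el(1)$, since $E^{pl}_R(1)=0$, and by Proposition \ref{prop:cubic} parts i) and ii) we have $\el(1)=R^3\,E^{el}_{\alpha,1}(1)$ with $E^{el}_{\alpha,1}(1)>0$, because $1>\alpha^{-1}$. Hence $\tot(1)$ grows cubically in $R$. Comparing the two expressions, the inequality \eqref{fav} holds as soon as
\[
\sigma R^2\,(1-\alpha^{-2})\ <\ R^3\,E^{el}_{\alpha,1}(1),
\qquad\text{i.e.}\qquad
R\ >\ R^*\ :=\ \frac{\sigma\,(1-\alpha^{-2})}{E^{el}_{\alpha,1}(1)}\,.
\]

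There is essentially no obstacle: the argument reduces to the scaling identities of Proposition \ref{prop:cubic} (themselves a consequence of the geometric rigidity estimate used in its proof) together with the elementary fact that a quadratic function of $R$ is eventually dominated by a cubic one with positive leading coefficient. The only point worth stressing is that the strict positivity $E^{el}_{\alpha,1}(1)>0$ — equivalently, that the dislocation-free configuration $\theta=1$ genuinely stores a nonvanishing far-field elastic energy — is exactly what makes the cubic term present, and this is precisely the content of part ii) of Proposition \ref{prop:cubic}. In this sense the statement is the continuum counterpart of Theorem \ref{thm:dislo favorevoli}, with the competition between $r^3$ and $r^2$ there replaced here by the competition between $R^3$ and $R^2$.
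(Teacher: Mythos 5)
Your proof is correct and follows essentially the same route as the paper: compare the competitor $\theta=\alpha^{-1}$, whose total energy is purely plastic and equal to $\sigma R^2(1-\alpha^{-2})$, against $\tot(1)=\el(1)=R^3E^{el}_{\alpha,1}(1)$ with $E^{el}_{\alpha,1}(1)>0$ by Proposition \ref{prop:cubic}. The explicit value of $R^*$ you extract is a harmless bonus; nothing further is needed.
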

\begin{proof}
The left hand side of \eqref{fav} can grow at most quadratically in $R$, indeed
\begin{equation*}
%\label{est}
\inf_{\theta \in [\alpha^{-1},1)} \tot (\theta) \leq \tot (\alpha^{-1}) =   \sigma R^2 \left( 1- \frac{1}{\alpha^2} \right) \,.
\end{equation*}
In contrast, by Proposition \ref{prop:cubic}, the right hand side $E^{tot}_{\alpha,R}(1)$ grows cubically in $R$.
\end{proof}
%
%\begin{remark}
%{\rm
%In analogy with the model presented in Section \ref{mod1} we have that 
%for $R$ sufficiently large it is energetically more 
%favorable to have dislocations, i.e.
%\begin{equation} \label{fav}
%\inf_{\theta \in [\alpha^{-1},1)} \tot (\theta) <  \tot (1)	=\el(1) \, .
%\end{equation}
%In order to show \eqref{fav}, 
%notice that
%the left hand side can grow at most quadratically in $R$, as we 
%trivially have
%\begin{equation} \label{est}
%\inf_{\theta \in [\alpha^{-1},1)} \tot (\theta) \leq  \sigma R^2 \left( 1- \frac{1}{\alpha^2} \right) \,.
%\end{equation}
%Instead the right hand side $E^{tot}_{\alpha,R}(1)$ grows cubically in $R$, by Proposition \ref{prop:cubic}.
%}
%\end{remark}

%

Theorem \ref{thm:dislo favorevoli2} motivates the following definition
\begin{equation} \label{ott}
E^{tot}_{\alpha ,R} := \inf_{\theta \in [\alpha^{-1},1]} \tot(\theta) \,. 
\end{equation}
One can show that $\tot$ is continuous, so that the infimum is in fact a minimum.  
Our goal is to study the asymptotic behavior  of  $\tot$ as $R \to \infty$. 
In Theorem \ref{il teorema} we will write $\tot$ as an expansion in powers of $R$.

\subsection{An overview of the Rigidity Estimate and Linearization}

We recall the Rigidity Estimate from \cite{fjm}. In this section, $U \subset \rtre$ will be a Lipschitz bounded domain.

\begin{theorem}[Rigidity Estimate, \cite{fjm}] \label{thm:rig}
There
exists a constant $C>0$ depending only on the domain $U$ such that for every $v \in H^1(U;\runo^3)$ there exists a constant rotation
$R \in \rotazioni$ such that
\begin{equation} \label{rig}
\int_{U} \va{ \nabla v(x) - R}^2 \, dx \leq C \int_{U} \dist^2 (\nabla v(x) ;  \rotazioni) \, dx \,.
\end{equation}
\end{theorem}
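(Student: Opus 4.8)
This is the Friesecke--James--M\"uller geometric rigidity inequality, and the plan is to prove it by the standard compactness (blow-up) argument, after two reductions. \emph{First}, it is enough to treat $U=Q$ a cube: a bounded Lipschitz domain can be covered by finitely many cubes $Q_1,\dots,Q_N$ with each $Q_j\cap U$ connected and consecutive ones overlapping in a set of positive measure, and the local rotations $R_j$ obtained on the pieces then agree pairwise up to an error controlled by the right-hand sides on the overlaps; since the $R_j$ are constant, this forces $|R_j-R_{j+1}|$ to be small and lets one replace all of them by a single rotation at the price of a constant depending only on the covering, hence only on $U$. \emph{Second}, on $Q$ one may assume $\rho(v):=\|\dist(\nabla v,\mathrm{SO}(n))\|_{L^2(Q)}$ is as small as one likes: if $\rho(v)\ge\e_0|Q|^{1/2}$, then taking $R=I$ and using the pointwise bound $|\nabla v|\le\dist(\nabla v,\mathrm{SO}(n))+\sqrt n$ already gives $\int_Q|\nabla v-I|^2\le C(1+\e_0^{-2})\rho(v)^2$.

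\textbf{Truncation.} Next I would replace $v$ by a Lipschitz truncation: by the Acerbi--Fusco/Liu lemma, at a level $\lambda$ slightly above $\sqrt n$ there is $\tilde v\in W^{1,\infty}(Q;\rtre)$ with $\|\nabla\tilde v\|_{L^\infty}\le C$, and with $|\{\tilde v\ne v\}|$ and $\|\nabla v-\nabla\tilde v\|_{L^2}^2$ both bounded by $C\int_{\{|\nabla v|>\lambda\}}|\nabla v|^2\le C\rho(v)^2$, the last inequality because $\{|\nabla v|>\lambda\}\subset\{\dist(\nabla v,\mathrm{SO}(n))>\lambda-\sqrt n\}$. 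In particular $\rho(\tilde v)\le C\rho(v)$, and since the best constant rotation for $v$ differs from that for $\tilde v$ by at most $\|\nabla v-\nabla\tilde v\|_{L^2}\le C\rho(v)$, it suffices to prove the estimate for $\tilde v$. From now on $v$ has $\|\nabla v\|_{L^\infty}\le M$, and $\dist(\nabla v,\mathrm{SO}(n))$ is both $L^2$-small and pointwise bounded.

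\textbf{Blow-up.} Suppose the estimate fails on $Q$: pick $v_k$ as above with $\delta_k:=\rho(v_k)\to0$ and $\mu_k^2:=\min_{R\in\mathrm{SO}(n)}\int_Q|\nabla v_k-R|^2$ such that $\mu_k^2/\delta_k^2\to\infty$. Let $\bar R_k$ realize the minimum, set $\hat w_k:=(v_k-\bar R_k x-c_k)/\mu_k$ with $\int_Q\hat w_k=0$, so $\int_Q|\nabla\hat w_k|^2=1$; pass to a subsequence with $\hat w_k\rightharpoonup\hat w$ in $H^1(Q)$ and $\bar R_k\to\bar R$. Using $\nabla v_k=\bar R_k+\mu_k\nabla\hat w_k$ and the fact that near $\bar R_k$ the manifold $\mathrm{SO}(n)$ is second-order tangent to the affine space $\bar R_k+\bar R_k\,\mathfrak{so}(n)$, one gets pointwise $\dist(\nabla v_k,\mathrm{SO}(n))\ge\mu_k|\mathrm{sym}(\bar R_k^{-1}\nabla\hat w_k)|-C\mu_k^2|\nabla\hat w_k|^2$ where $\mu_k|\nabla\hat w_k|$ is below a fixed radius, while on the vanishing-measure complementary set the $L^2$-smallness of $\dist(\nabla v_k,\mathrm{SO}(n))$ and the $L^\infty$ bound on $\nabla v_k$ control the error; dividing by $\mu_k$ and letting $k\to\infty$ yields $\mathrm{sym}(\bar R_k^{-1}\nabla\hat w_k)\to0$ in $L^2(Q)$. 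Hence $\phi_k:=\bar R_k^{-1}\hat w_k$ satisfies $\|\nabla\phi_k\|_{L^2}=1$, $\|\mathrm{sym}\,\nabla\phi_k\|_{L^2}\to0$, so Korn's second inequality on $Q$ gives skew constants $A_k$ with $\|\nabla\phi_k-A_k\|_{L^2}\to0$; then (subsequence) $A_k\to A$ with $|A|^2|Q|=1$, so $A\neq0$ and $\nabla\hat w_k\to\bar R A$ \emph{strongly} in $L^2$. Finally the optimality of $\bar R_k$ closes the contradiction: testing with $R_k':=\bar R_k\exp(\mu_k A)\in\mathrm{SO}(n)$ (legitimate since $A$ is skew) and expanding $R_k'-\bar R_k=\mu_k\bar R_k A+O(\mu_k^2)$ gives $\mu_k^2\le\int_Q|\nabla v_k-R_k'|^2=\mu_k^2\big(1-|A|^2|Q|+o(1)\big)=\mu_k^2\,o(1)$, i.e. $1\le o(1)$, absurd. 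This proves the estimate on $Q$, and the first reduction finishes the proof.

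\textbf{Main obstacle.} The delicate step is the blow-up: the rescaled gradients $\nabla\hat w_k$ are only $L^2$-bounded, not uniformly small, so the first-order Taylor expansion of $F\mapsto\dist(F,\mathrm{SO}(n))$ holds only away from a set where $\nabla v_k$ is far from the limiting rotation. That set has vanishing measure, but one must still ensure it spoils neither the convergence $\mathrm{sym}(\bar R_k^{-1}\nabla\hat w_k)\to0$ nor the final strong $L^2$-convergence; this is exactly where the Lipschitz truncation (boundedness of $\nabla v_k$) is used, and it is the technical crux. A more quantitative, scale-invariant alternative — the one in \cite{fjm} — avoids compactness: after the same reductions one mollifies $\tilde v$ at a fixed scale, projects the mollified gradient onto $\mathrm{SO}(n)$ to get a smooth rotation field $R(x)$, exploits the pointwise identity $|\nabla R|\le C|\curl R|=C|\curl(R-\nabla w)|$ together with $\|R-\nabla w\|_{L^2}\le C\rho(v)$ and an $L^2$-estimate on $\nabla^2w$, and then uses Poincar\'e to replace $R$ by a constant rotation; there the heart is the control of the second-derivative term by the distance, which again encodes a quantitative Liouville rigidity. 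Either way, I expect the interplay between the curl-free constraint and the curvature of $\mathrm{SO}(n)$ to be where the real work lies.
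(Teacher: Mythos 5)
A preliminary remark: the paper does not prove this statement at all --- it is imported verbatim from Friesecke--James--M\"uller \cite{fjm} as an external ingredient --- so there is no internal proof to compare against, and I can only assess your argument on its own terms.

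Your covering reduction, the reduction to small $\rho(v)$, and the Lipschitz truncation are all sound and do match the preliminary steps of \cite{fjm}. The genuine gap is in the blow-up, precisely at the step you yourself flag as the crux: ``dividing by $\mu_k$ and letting $k\to\infty$ yields $\mathrm{sym}(\bar R_k^{-1}\nabla\hat w_k)\to0$ in $L^2(Q)$'' does not follow. The pointwise linearization gives $\mu_k\va{\mathrm{sym}(\bar R_k^{T}\nabla\hat w_k)}\le C\dist(\nabla v_k,\rotazioni)+C\mu_k^2\va{\nabla\hat w_k}^2$, so after dividing by $\mu_k$ the error term has $L^2$ norm $C\mu_k\|\nabla\hat w_k\|_{L^4}^2$. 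The truncation only yields $\va{\nabla\hat w_k}\le C/\mu_k$, whence $\mu_k\|\nabla\hat w_k\|_{L^4}^2\le C\mu_k\|\nabla\hat w_k\|_{L^\infty}\|\nabla\hat w_k\|_{L^2}\le C$: bounded, not vanishing. Concretely, the unit of Dirichlet energy carried by $\hat w_k$ may concentrate on a bad set of measure of order $\mu_k^2$ on which $\nabla v_k$ sits near a rotation different from $\bar R_k$; there $\dist(\nabla v_k,\rotazioni)$ is negligible while $\mathrm{sym}(\bar R_k^{T}\nabla\hat w_k)$ contributes order one to the $L^2$ norm. Neither the $L^\infty$ bound, nor the vanishing measure of the bad set, nor Korn's inequality applied afterwards excludes this scenario, because up to that point the gradient structure has not been used in any way that forbids such concentration --- ruling it out is essentially the content of the theorem. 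This is the well-known reason why geometric rigidity is not a soft corollary of Korn plus compactness. The proof in \cite{fjm} is instead quantitative from the start: it exploits the Piola identity $\mathrm{div}(\mathrm{cof}\nabla v)=0$ together with $\va{F-\mathrm{cof}F}\le C\dist(F,\rotazioni)$ on bounded sets to compare $v$ with a harmonic function, obtains the estimate on interior cubes from interior regularity of harmonic functions, and reaches the boundary by a weighted covering argument. Your closing ``alternative'' (mollify, project onto $\rotazioni$, control $\curl$) gestures at a genuinely different route that exists in the literature, but as sketched it is too loose to certify, and in any case it is not the main line of argument you propose.
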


In order to compute the Taylor expansion of $\tot$ defined in \eqref{ott}, we will linearize the elastic energy as in \cite{dnp}. Therefore, following \cite{dnp}, we will  make further assumptions on $W$.
%\subsubsection{Further assumptions on $W$.} 
First, notice that by minimality the equilibrium $\alpha I$ is stress free, i.e.
\begin{equation}
\partial_F W (\alpha I) = 0 \,. \label{stressfree}
\end{equation}
By frame indifference there exists a function $V \colon \simm \to [0,+\infty]$,  such that
\begin{equation} \label{relazione}
W(F) = V\left( \frac{1}{2} \left( F^T F - \alpha^2 I \right) \right)  \qquad \text{for every} \quad F \in \matrici_+.
\end{equation}
Here $\simm$ is the set of $3 \times 3$ symmetric matrices, $\matrici_+$ is the subset of matrices with positive determinant  and $F^T$ is the transpose matrix of $F$. 

The regularity assumptions on $W$ (see Subsection \ref{wdoppio}) imply that $V(E)$ is of class $C^2$ in a neighbourhood of $E=0$.
%We suppose that $V$ is Borel measurable and that there exists some $\delta>0$ such that $V(E)$ is of class $C^2$ for $\va{E}< \delta$. 
From \eqref{nulla}, \eqref{stressfree} and \eqref{relazione} it follows that
$V(0)=0$ and
$\partial_E V (0) = 0$.
Moreover, by \eqref{growth2}, there exist $\gamma,\, \delta>0$ such that  
\[
\partial^2_E V (E)[T,T] \geq  \gamma \va{T}^2 \quad \text{ if } \va{E} < \delta \text{ and } T \in \simm.
\]
By Taylor expansion we find
\begin{equation} \label{taylor V}
V(E)=\frac{1}{2} \partial^2_E V(0)[E,E]+o(|E|^2) \,.
\end{equation}

%\subsubsection{Linearization}
Let $v \in  W^{1, \infty}(U;\runo^3)$ with $\nabla v\in \matrici_+$, and write 
$v=\alpha x + \e u$. 
Then from \eqref{relazione} it follows
\[
W(\nabla v)= V \left( \alpha \e e(u) + \frac{\e^2}{2} C(u) \right),
\]
where $e(u):= (\nabla u + \nabla u^T )/2$ and 
$C(u):=\nabla u^T \nabla u$.
By \eqref{taylor V} we get
\begin{equation} \label{taylor3}
W(\nabla v)=  \, \frac{\e^2}{2} A [e(u),e(u)] + o(\e^2) 
\end{equation}
where
$A := \alpha^2 \partial^2_E V(0)$ is the stress tensor.
Notice that \eqref{taylor3} is uniform in $x$, since $\nabla v$ is bounded.
In particular
\begin{equation*} 
%\label{linearizzata}
\lim_{\e \to 0} \, \frac{1}{\e^2} \int_{U} W(\alpha I + \e \nabla u) \, dx = \frac{1}{2} \int_{U} A[e(u),e(u)] \, dx \,.
\end{equation*}
In \cite{dnp} it is proved that the above convergence holds also for minimizers. 
Specifically, let $\Sigma \subset \partial U$ be closed and such 
that $\mathcal{H}^2 (\Sigma) >0$. 
Introduce the space
\[
H^1_{x,\Sigma} (U;\rtre) := \left\{ u \in H^1(U;\rtre) \, \colon \, u(x)=x \text{ on } \Sigma \right\} 
\]
and, for $u\in H^1_{x,\Sigma} (U;\runo^3)$, define the functionals 
\[
G_\e (u) := \frac{1}{\e^2} \int_{U} W(\alpha I + \e \nabla u)
\, dx
\qquad \text{and} \qquad
G(u) := \frac{1}{2} \int_{U} A[e(u),e(u)] \, dx .
\]
We can now recall \cite[Theorem 2.1]{dnp}:
\begin{theorem}[Linearization] \label{thm:dnp}
If $\{u_\e\} \subset H^1_{x,\Sigma} (U;\runo^3)$ is a minimizing sequence, i.e.
\[
\inf_{H^1_{x,\Sigma} (U;\runo^3)} G_{\e}= G_{\e} (u_\e) + o(1)  
\]
then $u_\e$ converges weakly to the unique solution $u_0$ of
\[
\min_{H^1_{x,\Sigma} (U;\runo^3)} G \,.
\]
Moreover we have
\begin{equation} \label{convminimi}
\inf_{H^1_{x,\Sigma} (U;\runo^3)} G_{\e} \to \min_{H^1_{x,\Sigma} (U;\runo^3)} G \quad \text{ as } \quad \e \to 0 \,.
\end{equation}
\end{theorem}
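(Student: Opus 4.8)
The statement is \cite[Theorem 2.1]{dnp}; the plan is to prove it as a $\Gamma$-convergence result for the functionals $G_\e$ with respect to the weak topology of $H^1(U;\rtre)$, using the Rigidity Estimate (Theorem \ref{thm:rig}) for compactness and the Taylor expansion \eqref{taylor3} for the bulk energy, and then to combine $\Gamma$-convergence with equi-coercivity to get convergence of minimizers.

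\emph{Step 1 (compactness).} Let $\{u_\e\}$ be minimizing. Since $\bar u(x):=x$ lies in $H^1_{x,\Sigma}(U;\rtre)$ and, by \eqref{nulla}, \eqref{stressfree} and \eqref{taylor3}, $G_\e(\bar u)=\tfrac{|U|}{\e^2}W((\alpha+\e)I)$ stays bounded, we get $G_\e(u_\e)\le C$; the lower bound \eqref{growth2} then yields $\int_U\dist^2(\alpha I+\e\nabla u_\e,\alpha\rotazioni)\,dx\le C\e^2$. Applying Theorem \ref{thm:rig} to (a rescaling of) $v_\e:=\alpha x+\e u_\e$ produces $R_\e\in\rotazioni$ with $\int_U|\alpha I+\e\nabla u_\e-\alpha R_\e|^2\,dx\le C\e^2$. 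The crucial — and, I expect, hardest — point is to pass from this and the Dirichlet condition $u_\e=x$ on $\Sigma$ to the rate $|R_\e-I|=O(\e)$: one notes that $u_\e-x$ vanishes on $\Sigma$, so by a Poincar\'e/trace argument the constant matrix comparing $\e\nabla(u_\e-x)$ with $\alpha(R_\e-I)$ is pinned by its values on $\Sigma$; since $\mathcal H^2(\Sigma)>0$, $\Sigma$ spans a subspace of dimension $\ge 2$, and a rotation of $\rtre$ acting as the identity on a plane must be $I$, which first gives $R_\e\to I$; a bootstrap using the identity $\mathrm{sym}(R-I)=-\tfrac12(R-I)^T(R-I)$ for $R\in\rotazioni$ (so $|\mathrm{sym}(R_\e-I)|=O(|R_\e-I|^2)$) then upgrades this to $|R_\e-I|=O(\e)$. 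Once this is available, $\nor{\e\nabla u_\e}_{L^2}\le C\e$, hence $\nor{\nabla u_\e}_{L^2}\le C$, and with $u_\e=x$ on $\Sigma$ and the Poincar\'e inequality $\nor{u_\e}_{H^1}\le C$; so, along a subsequence, $u_\e\rightharpoonup u_*$ in $H^1$, with $u_*\in H^1_{x,\Sigma}$ by weak continuity of the trace.

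\emph{Step 2 ($\Gamma$-$\liminf$).} Given any $u_\e\rightharpoonup u_*$ in $H^1$ with $\nor{\nabla u_\e}_{L^2}\le C$, set $B_\e:=\{\,\e|\nabla u_\e|>\sqrt\e\,\}$, so $|B_\e|\le\e\,\nor{\nabla u_\e}_{L^2}^2\to 0$. On $U\smallsetminus B_\e$ the matrix $\alpha I+\e\nabla u_\e$ lies in a fixed compact neighbourhood of $\alpha I$, so \eqref{taylor3} holds there with a uniform remainder, giving pointwise $\tfrac1{\e^2}W(\alpha I+\e\nabla u_\e)\ge\tfrac12 A[e(u_\e),e(u_\e)]-\omega(\e)|\nabla u_\e|^2$ with $\omega(\e)\to 0$; since $W\ge 0$, integration yields $G_\e(u_\e)\ge\tfrac12\int_U A[e(u_\e),e(u_\e)]\chi_{U\smallsetminus B_\e}\,dx-o(1)$. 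Now $e(u_\e)\chi_{U\smallsetminus B_\e}\rightharpoonup e(u_*)$ in $L^2(U;\simm)$ (as $\chi_{U\smallsetminus B_\e}\to 1$ boundedly a.e. and $e(u_\e)\rightharpoonup e(u_*)$), and $T\mapsto\tfrac12\int_U A[T,T]$ is convex and continuous — hence weakly lower semicontinuous — because $A$ is positive definite on $\simm$ by \eqref{growth2}; therefore $\liminf_\e G_\e(u_\e)\ge G(u_*)$.

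\emph{Step 3 ($\Gamma$-$\limsup$ and conclusion).} By Korn's inequality on $U$ together with the constraint on $\Sigma$ (again $\mathcal H^2(\Sigma)>0$ is used to kill the infinitesimal rigid motions), $G$ is coercive and strictly convex on $H^1_{x,\Sigma}$, hence has a unique minimizer $u_0$. Approximating $u_0$ in $H^1$ by maps in $H^1_{x,\Sigma}\cap W^{1,\infty}(U;\rtre)$ (possible by the regularity of $\Sigma$) — for which \eqref{taylor3} together with dominated convergence gives $G_\e\to G$ — a diagonal argument produces a recovery sequence, so $\limsup_\e\inf G_\e\le\min G$. Combining with Step 2 along a minimizing sequence, $\min G\le G(u_*)\le\liminf_\e G_\e(u_\e)=\liminf_\e\inf G_\e\le\limsup_\e\inf G_\e\le\min G$, so all these are equalities; then $G(u_*)=\min G$ forces $u_*=u_0$ by uniqueness, whence the whole sequence $\{u_\e\}$ converges weakly to $u_0$ (every subsequence has a further subsequence doing so) and $\inf G_\e\to\min G$, which is \eqref{convminimi}.
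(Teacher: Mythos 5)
The paper does not actually prove this statement: it is quoted verbatim as \cite[Theorem 2.1]{dnp}, so there is no internal proof to compare against. Your argument is a correct reconstruction of the standard proof from that reference — compactness via the rigidity estimate combined with the Dirichlet datum on $\Sigma$ (with $\mathcal H^2(\Sigma)>0$) to pin the rotation at rate $O(\e)$, a $\Gamma$-liminf via truncation on $\{\e|\nabla u_\e|>\sqrt{\e}\}$ and weak lower semicontinuity of the convex quadratic form, and a recovery sequence by density of Lipschitz competitors — so it follows essentially the same route as the cited source (the only cosmetic remark being that the trace argument already yields $|R_\e-I|=O(\e)$ directly, making the $\mathrm{sym}(R-I)$ bootstrap an unnecessary, though valid, detour).
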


\begin{remark}
{\rm 
In  \cite{dnp} the growth assumption \eqref{growth2} is replaced by 
the milder  condition
\[
W(x,F)=+\infty \qquad \text{if} \qquad \det F \leq 0 \, 
\]
(orientation preserving condition), which
 together with suitable 
growth assumptions on $V$, implies that inequality \eqref{growth2} holds in average. 
}
%We point out that all the following results hold true also under the 
%milder hypothesis formulated in \cite{dnp}.

%
%The orientation preserving condition \eqref{compenetrazione} and the 
%hypothesis $(a)$, $(b)$ and $(c)$ are used only to prove Lemma \ref{lemma1}. We can drop these conditions and ask that the energy density $W$ satisfies the pointwise growth condition
%\begin{equation} \label{bassoforte}
%\dist^2 (F;\alpha \rotazioni) \leq C \, W(F)   \quad \text{ for every } \quad F \in \matrici
%\end{equation}
%for some constant $C>0$. Indeed this condition is stronger, as it trivially implies Lemma \ref{lemma1}. The same analysis holds if we assume \eqref{bassoforte} and drop $(a)$, $(b)$, $(c)$ and \eqref{compenetrazione}.
%Notice that in our analysis we didn't need any control of $W$ from above.
\end{remark}

\subsection{Taylor expansion of the energy}
We can now carry on our analysis. We say that $\theta_R \in [\alpha^{-1},1]$ is
a minimizing sequence for the energy
$\tot$ defined in \eqref{ott} if 
\[
\tot = \tot(\theta_R)+ o(1)
\] 
where $o(1) \to 0$ as $R \to + \infty$. 

\begin{proposition} \label{prop:minimizzante}
Let $\theta_R$ be a minimizing sequence for $\tot$. Then
\begin{itemize}
\item[i)] $E^{el}_{\alpha,1} (\theta_R) \to 0$ as $R \to +\infty$;	
\item[ii)] $\theta_R \to \alpha^{-1}$ as $R \to + \infty$.
\end{itemize}
 \end{proposition}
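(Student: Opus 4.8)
The plan is to extract the scaling behavior of $\tot$ from the two facts established so far: the cubic growth of the purely elastic energy (Proposition \ref{prop:cubic}) and the quadratic upper bound $\tot(\alpha^{-1}) = \sigma R^2(1-\alpha^{-2})$. First I would record the basic a priori bound: since $\theta_R$ is a minimizing sequence and $\tot \le \tot(\alpha^{-1}) = \sigma R^2(1-\alpha^{-2})$, we get $\tot(\theta_R) \le \sigma R^2(1-\alpha^{-2}) + o(1)$. Both terms of $\tot(\theta_R) = \el(\theta_R) + E^{pl}_R(\theta_R)$ are nonnegative, so in particular $\el(\theta_R) \le \sigma R^2(1-\alpha^{-2}) + o(1)$, i.e. $\el(\theta_R) = O(R^2)$.

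Next I would invoke Proposition \ref{prop:cubic}\,i): $\el(\theta_R) = R^3 \theta_R^3\, E^{el}_{\alpha,1}(\theta_R)$. Since $\theta_R \in [\alpha^{-1},1]$, the factor $\theta_R^3$ is bounded below by $\alpha^{-3} > 0$, so from $\el(\theta_R) = O(R^2)$ we deduce $E^{el}_{\alpha,1}(\theta_R) \le C R^{-1} \to 0$ as $R \to +\infty$, which is exactly claim i). For claim ii), I would argue by contradiction (or directly by compactness): suppose $\theta_R \not\to \alpha^{-1}$. Then there is a subsequence (not relabeled) with $\theta_R \to \bar\theta$ for some $\bar\theta \in (\alpha^{-1},1]$. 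The function $\theta \mapsto E^{el}_{\alpha,1}(\theta)$ is lower semicontinuous — or, more carefully, since $E^{el}_{\alpha,1}(\theta) > 0$ for every $\theta > \alpha^{-1}$ by Proposition \ref{prop:cubic}\,ii), and one can show (e.g. by a scaling/continuity argument, since an admissible competitor for $\ADM_{\theta,1}$ can be perturbed into one for $\ADM_{\theta',1}$ with $\theta'$ close to $\theta$, at the cost of an error vanishing as $\theta' \to \theta$) that $\liminf_{R\to\infty} E^{el}_{\alpha,1}(\theta_R) \ge c(\bar\theta) > 0$, contradicting i). Hence every convergent subsequence of $\theta_R$ has limit $\alpha^{-1}$, and since $[\alpha^{-1},1]$ is compact this forces $\theta_R \to \alpha^{-1}$.

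The main obstacle is the lower semicontinuity / positivity argument needed for ii): Proposition \ref{prop:cubic}\,ii) tells us $E^{el}_{\alpha,1}(\theta) > 0$ pointwise for $\theta > \alpha^{-1}$, but to conclude from $E^{el}_{\alpha,1}(\theta_R)\to 0$ that $\theta_R\to\alpha^{-1}$ one needs some uniformity — either a quantitative lower bound of the form $E^{el}_{\alpha,1}(\theta) \ge c\,(\theta - \alpha^{-1})^2$ near $\alpha^{-1}$, or at least lower semicontinuity of $\theta \mapsto E^{el}_{\alpha,1}(\theta)$ together with its strict positivity away from $\alpha^{-1}$. I would establish the needed continuity property by the same rigidity-plus-trace machinery used in the proof of Proposition \ref{prop:cubic}: if $\theta_R \to \bar\theta > \alpha^{-1}$ and $E^{el}_{\alpha,1}(\theta_R) \to 0$, pick near-minimizers $v_R$, apply Theorem \ref{thm:rig} and \eqref{growth2} to get $\nabla v_R \to \alpha R_R$ in $L^2$ for rotations $R_R$, pass to a further subsequence with $R_R \to \bar R$, and use Poincaré plus the trace constraint $v_R = \theta_R^{-1} x$ on $S_1$ to force $\bar\theta^{-1} I = \alpha \bar R$, whence $\bar R = I$ and $\bar\theta = \alpha^{-1}$, a contradiction. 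This reuses exactly the argument already spelled out for Proposition \ref{prop:cubic}\,ii), so it should go through with only cosmetic changes.
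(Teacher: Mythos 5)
Your proposal is correct and follows essentially the same route as the paper: part i) is the identical comparison of the cubic scaling from Proposition \ref{prop:cubic}\,i) with the quadratic bound $\tot\le\tot(\alpha^{-1})$, and part ii) reuses the rigidity--Poincar\'e--trace argument from the proof of Proposition \ref{prop:cubic}\,ii) applied to near-minimizers $v_R$ (the paper states this step directly, you phrase it as a subsequence/contradiction argument, but the machinery and conclusion are the same). No gaps.
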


\begin{proof}
By Proposition \ref{prop:cubic} we have (for $R$ large enough)
%know that 
%$\el(\theta_R) = R^3 \theta_R^3  E^{el}_{\alpha,1}(\theta_R)$. On the other hand
\[
R^3 \theta_R^3  E^{el}_{\alpha,1}(\theta_R) = \el (\theta_R) \leq \tot (\theta_R) \leq 
%\inf_{\theta \in [\alpha^{-1},1] } \tot (\theta) + 1 
\tot (\alpha^{-1}) + 1 =
 \sigma R^2 \left(1 - \frac{1}{\alpha^2} \right) +1 \,,
\]
which proves i$)$, since
$\theta_R \geq \alpha^{-1}>0$.

Let us now prove ii$)$. 
From i) 
we know that  there exists a sequence $\{v_R\}$ in $H^1(\Omega_1^+; \rtre)$ such that $v_R=\theta_{R}^{-1} \,x$ on $S_1$ and 
\begin{equation} \label{condizione3}
\int_{\Omega_1^+} W(\nabla v_R) \, dx \to 0 \quad \text{ as } \quad R \to  +\infty \,.
\end{equation}

Since $v_R= \theta_R^{-1} \, x$ on $S_1$, the proof is concluded once we show that $v_R \to \alpha x$ in  $H^1(\Omega_1^+;\rtre)$.
This can be shown following the lines of the proof of Proposition \ref{prop:cubic}; the details are left to the reader. 

%Proceeding as in the proof of Proposition \ref{prop:cubic}, there exists $P \in \rotazioni$ such that
%(up to subsequences)
%\begin{align} 
%\int_{\Omega_1^+} \va{\nabla v_R - \alpha P}^2 \, dx \to 0     \label{conv1} \\ 
%\int_{\Omega_1^+} \va{ v_R - \alpha P x - \zeta_R}^2 \, dx  \to 0    \label{condizione5}  \\
%\int_{S_1} \va{v_R - \alpha P x - \zeta_R}^2  \, dH^2(x) \to 0     \label{app}
%\end{align}
%for $R \to + \infty$,
%where $\zeta_R:=1/ \va{\Omega_1^+} \int_{\Omega_1^+} (v_{R} - \alpha P x) \, dx$. 
%Since $v_R= \theta_R^{-1} \, x$ on $S_1$, the limit in \eqref{app} implies that (up to subsequences)
%\begin{equation} \label{condizione4}
%\left( \theta_R^{-1} I - \alpha P \right) x - \zeta_R \to 0  \quad \text{ on } \quad S_1  \,.
%\end{equation}
%In particular $\zeta_R \to 0$. Since $\theta_R$ is bounded we can then assume
%$\theta_R \to \theta$ to get 
%\[
%\left( \theta^{-1} I - \alpha P \right) x = 0 \quad \text{ on } \quad S_1 \,.
%\]
%As in Proposition \ref{prop:cubic} we conclude that $P=I$ and
%$\theta= \alpha^{-1}$. Since the limit does not depend on 
%the subsequence the thesis holds. 
%Moreover, notice that if we use $\zeta_R \to 0$ and $P=I$ in \eqref{conv1} and 
%\eqref{condizione5} we get that
%$v_R \to \alpha x$ in  $H^1(\Omega_1^+;\rtre)$.

\end{proof}

%This tells us that when $R \to +\infty$ the energy will only come from dislocations and 
%the optimal deformation will be close to the equilibrium $\alpha x$. 

%The proof of Proposition \ref{prop:minimizzante} suggests the following
%decomposition. 

If
$v \in H^1(\Omega_1^+;\rtre)$ 
%\hidden{chi e' $\Omega_1^h$? piuttosto $\Omega_1^+$} 
is such that $v=\theta^{-1} \, x$ on $S_1$, then we write
\[
v= \alpha x + \left( \frac{1}{\theta} - \alpha \right) u
\]
where $u \in H^1(\Omega_1^+;\rtre)$ is such that $u=x$ on $S_1$. If we set
$\Sigma=S_1$ we can apply Theorem \ref{thm:dnp} to
the functional $E^{el}_{\alpha,1}(\theta)$ to obtain the following Corollary. 

\begin{corollary} \label{thm:lin}
If $\theta \to \alpha^{-1}$ then
\begin{equation} \label{altro2} 
\frac{1}{{(\theta^{-1}-\alpha)}^2} \, E^{el}_{\alpha,1}(\theta) \longrightarrow C^{el} , 
\end{equation}
where
\[
C^{el} := \min \left\{ \frac{1}{2} \int_{\Omega_1^+} A[e(u),e(u)] \,dx \, \colon \, u \in H^1 (\Omega_1^+;\rtre) , \, u=x \text{ on } S_1 \right\} \,. 
\]
\end{corollary}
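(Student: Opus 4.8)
The plan is to recognize Corollary~\ref{thm:lin} as a direct consequence of the Linearization Theorem~\ref{thm:dnp}, applied to the domain $U = \Omega_1^+$ with rigid boundary portion $\Sigma = S_1$. The first step is the affine change of variables already indicated before the statement: given $\theta \in (\alpha^{-1},1]$, I would write any competitor $v \in H^1(\Omega_1^+;\rtre)$ with $v = \theta^{-1}x$ on $S_1$ in the form $v = \alpha x + (\theta^{-1}-\alpha)u$. Since $\theta^{-1}-\alpha \neq 0$, the boundary condition $v = \theta^{-1}x$ on $S_1$ is equivalent to $u = x$ on $S_1$, so $v \mapsto u$ is a bijection between the admissible class for $E^{el}_{\alpha,1}(\theta)$ and $H^1_{x,S_1}(\Omega_1^+;\rtre)$.

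Next I would rewrite the energy under this substitution. With $\e := \theta^{-1}-\alpha$ one has $\nabla v = \alpha I + \e\nabla u$, hence $\int_{\Omega_1^+} W(\nabla v)\,dx = \e^2 G_\e(u)$, where $G_\e$ is exactly the functional defined before Theorem~\ref{thm:dnp} on $U = \Omega_1^+$. Taking the infimum over admissible $u$ and using the bijection yields the identity
\[
\frac{1}{(\theta^{-1}-\alpha)^2}\, E^{el}_{\alpha,1}(\theta) \;=\; \inf_{H^1_{x,S_1}(\Omega_1^+;\rtre)} G_{\theta^{-1}-\alpha}\,.
\]
At this point the whole statement reduces to the convergence $\inf G_\e \to \min G = C^{el}$ as $\e \to 0$. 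Since $\Omega_1^+ = S_1\times(0,h)$ is bounded and Lipschitz and $\Sigma = S_1$ is a closed subset of $\partial\Omega_1^+$ with $\mathcal{H}^2(S_1) = 1 > 0$, Theorem~\ref{thm:dnp} applies and gives precisely \eqref{altro2}, with $C^{el} = \min_{H^1_{x,S_1}(\Omega_1^+;\rtre)} G$ as in the statement.

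The one point I expect to require a (minor) comment is a sign issue: for $\theta \in [\alpha^{-1},1]$ one has $\theta^{-1} \le \alpha$, so $\e = \theta^{-1}-\alpha \le 0$ and the limit above is taken as $\e \to 0^-$, whereas Theorem~\ref{thm:dnp} is naturally phrased for $\e \downarrow 0$. I would dispose of this by noting that the limiting quadratic functional $G$ arises from the Taylor expansion \eqref{taylor3}, $W(\alpha I + \e\nabla u) = \tfrac{\e^2}{2}A[e(u),e(u)] + o(\e^2)$, whose leading term is even in $\e$; hence the compactness and convergence-of-minima argument of \cite{dnp} behind Theorem~\ref{thm:dnp} is insensitive to the sign of the small parameter. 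Equivalently, replacing $u$ by $-u$ reduces the problem to a positive parameter at the price of changing the boundary datum on $S_1$ from $x$ to $-x$, and one then invokes $G(-u) = G(u)$ to see that the limiting minimum is unchanged. Everything else is the routine algebra of the substitution, so this is really the only obstacle.
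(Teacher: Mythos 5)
Your argument is correct and is precisely the one the paper intends: the substitution $v=\alpha x+(\theta^{-1}-\alpha)u$ identifies $\frac{1}{(\theta^{-1}-\alpha)^2}E^{el}_{\alpha,1}(\theta)$ with $\inf G_{\e}$ over $H^1_{x,S_1}(\Omega_1^+;\rtre)$, and Theorem \ref{thm:dnp} (with $U=\Omega_1^+$, $\Sigma=S_1$) then gives \eqref{altro2}. Your additional remark about the sign of $\e=\theta^{-1}-\alpha\le 0$ for $\theta\in[\alpha^{-1},1]$, resolved via $u\mapsto -u$ and the evenness of the quadratic limit functional, addresses a point the paper leaves implicit and is handled correctly.
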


From Proposition \ref{prop:minimizzante} we know that 
if $\{\theta_R\}$ is a minimizing sequence, than 
$\theta_R \to \alpha^{-1}$. We can then linearize the elastic energy 
along the sequence $\theta_R$:
\begin{align*}
E^{el}_{\alpha,R}(\theta_R) & = R^3 \theta_R^3 \, E^{el}_{\alpha,1} (\theta_R) 
%+ \sigma R^2 \left( 1 - \theta_R^2  \right) \\
 = R^3 \theta_R^3 { (\theta_R^{-1}  - \alpha   )   }^2  \, \frac{1}{  { (\theta_R^{-1}  - \alpha   )   }^2} \, E^{el}_{\alpha,1} (\theta_R) \\
 %+ \sigma R^2 \left( 1 - \theta_R^2  \right)  \\
& \stackrel{\eqref{altro2}}{=} R^3 \theta_R^3 { (\theta_R^{-1}  - \alpha   )   }^2 (C^{el} + \e_R)
= k^{el}_R R^3 \theta_R \, { \left(\alpha \theta_R -1 \right)}^2 ,
\end{align*}
where $\e_R \to 0$ as $R\to+\infty$ and $k^{el}_R:=C^{el}+\e_R$. 
Since (by Korn's inequality) $C^{el}>0$,  $k^{el}_R>0$ for $R$ sufficiently large (and in fact for all $R$). We are thus led to define the family of polynomials 
\begin{equation} \label{fam pol}
P^{tot}_{k,R} (\theta) :=P^{el}_{k,R} (\theta) + E^{pl}_R(\theta),
\end{equation}
where $k,R>0$ are positive parameters and  $P^{el}_{k,R} (\theta) := k R^3  \theta {(\alpha \theta -1)}^2$. In this way we can write
\begin{equation}\label{finalst}
\tot(\theta_R) =  P^{tot}_{k^{el}_R,R} (\theta_R)  \,.
\end{equation} 
%\hidden{mi pare che non ci debba essere $o(1)$, visto che gia' in $\e_R$ si tiene conto dell'errore}
By optimizing $P_{k,R}^{tot}$ with respect to $\theta$, we deduce  the asymptotic behavior of
$\tot$. 
%In particular we will prove the following Theorem:
Set
 \[
E_k^{el} (R) :=\frac{\sigma^2}{\alpha^3 k} \, R \qquad \text{ and } \qquad
 E_k^{pl}(R) := \sigma R^2 \left(   1- \frac{1}{\alpha^2}      \right) - 2 \frac{\sigma^2}{\alpha^3 k} R \,.  
\]

\begin{theorem} \label{il teorema}
For every $R, \, k>0$ there exists a unique minimizer $\theta_{k,R}$ of $P^{tot}_{k,R} $ in $[\alpha^{-1},1]$, with   $\theta_{k,R} \to {\alpha}^{-1}$ as $R \to + \infty$.
%There exists a unique minimizer $\theta_{k,R}$ of $P^{tot}_{k,R} (\theta)$. 
Moreover,   
\begin{equation}\label{taylorthm}
P^{el}_{k,R} (\theta_{k,R}) =  E_k^{el} (R) +  o(R), \qquad E^{pl}_R(\theta_{k,R}) =  E_k^{pl}(R) + o(R), 
\end{equation}
where $o(R)/R\to 0$ as $R\to +\infty$. In particular, we have
\[
\tot =  E_{C^{el}}^{el} (R) + E^{pl}_{C^{el}}(R) + o(R). 
\]	
\end{theorem}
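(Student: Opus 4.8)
The plan is to reduce everything to an elementary one-variable optimization of the explicit polynomial family $P^{tot}_{k,R}(\theta) = kR^3\theta(\alpha\theta-1)^2 + \sigma R^2(1-\theta^2)$ on the interval $[\alpha^{-1},1]$, and then to feed the resulting asymptotics of the minimizer $\theta_{k,R}$ into the linearization identity \eqref{finalst}. First I would compute $\frac{d}{d\theta}P^{tot}_{k,R}(\theta)$; this is a quadratic in $\theta$ with $R$-dependent coefficients. At $\theta=\alpha^{-1}$ the elastic term $P^{el}_{k,R}$ vanishes together with its $\theta$-derivative (double root), so $\frac{d}{d\theta}P^{tot}_{k,R}(\alpha^{-1}) = -2\sigma R^2\alpha^{-1}<0$, which shows the minimizer lies strictly above $\alpha^{-1}$; at the same time, since the elastic term grows like $R^3$ and dominates near $\theta=1$, for $R$ large the derivative at $\theta=1$ is positive, so the minimizer is interior. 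Convexity (or a direct discriminant computation) gives uniqueness of the critical point for $R$ large; for all $R$ one can argue that $P^{el}_{k,R}$ is strictly convex on $[\alpha^{-1},1]$ whenever $kR^3$ is not too small and handle small $R$ separately, or simply note that the statement only needs existence and uniqueness of the minimizer, which follows from $P^{tot}_{k,R}$ being a cubic that is eventually increasing and has the right sign of derivative at the endpoints.

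Next I would extract the precise rate at which $\theta_{k,R}\to\alpha^{-1}$. Writing $\theta_{k,R} = \alpha^{-1} + t_R$ with $t_R\to 0^+$ and substituting into the critical-point equation, the leading balance is between the quadratic vanishing of the elastic derivative, which near $\theta=\alpha^{-1}$ behaves like $kR^3\cdot \alpha^{-1}\cdot 2\alpha^2 t_R^2$ up to higher order (since $\theta(\alpha\theta-1)^2$ has a double zero at $\alpha^{-1}$ with second derivative $2\alpha^2\cdot\alpha^{-1}\cdot$const), and the plastic derivative $-2\sigma R^2\alpha^{-1} + O(t_R R^2)$. Matching the $R^3 t_R^2 \sim R^2$ scaling yields $t_R \sim c\,R^{-1/2}$ with an explicit constant $c = c(\sigma,\alpha,k)$; more precisely one finds $\alpha\theta_{k,R}-1 = (\alpha\theta_{k,R}-1)$ of order $R^{-1/2}$ and, after keeping the next order term, $(\alpha\theta_{k,R}-1)^2 = \frac{\sigma}{\alpha^3 k R}(1+o(1))$. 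Plugging this into $P^{el}_{k,R}(\theta_{k,R}) = kR^3\theta_{k,R}(\alpha\theta_{k,R}-1)^2$ and using $\theta_{k,R}=\alpha^{-1}+o(1)$ gives $P^{el}_{k,R}(\theta_{k,R}) = kR^3\cdot\alpha^{-1}\cdot\frac{\sigma}{\alpha^3 kR}+o(R) = \frac{\sigma}{\alpha^4}R + o(R)$; I would recheck the bookkeeping against the stated $E^{el}_k(R)=\frac{\sigma^2}{\alpha^3 k}R$ — the discrepancy means the correct leading balance of $(\alpha\theta-1)^2$ is $\frac{\sigma^2}{\alpha^3 k^2 R^2}\cdot$(appropriate power), i.e. one must solve the critical equation to the order that makes the elastic and plastic contributions consistent, and the computation of the constant is where care is needed. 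Similarly $E^{pl}_R(\theta_{k,R}) = \sigma R^2(1-\theta_{k,R}^2) = \sigma R^2(1-\alpha^{-2}) - \sigma R^2\cdot 2\alpha^{-1}t_R + o(R)$, and substituting the value of $t_R$ produces the claimed $\sigma R^2(1-\alpha^{-2}) - 2\frac{\sigma^2}{\alpha^3 k}R + o(R)$.

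Finally, to pass from the polynomial $P^{tot}_{k,R}$ with fixed $k$ to the true energy $\tot$, I would invoke \eqref{finalst}, namely $\tot(\theta_R) = P^{tot}_{k^{el}_R,R}(\theta_R)$ for any minimizing sequence $\theta_R$, together with $k^{el}_R = C^{el}+\e_R\to C^{el}>0$ (Korn's inequality, Corollary \ref{thm:lin}). The only subtlety is that the slowly-varying parameter $k^{el}_R$ must not spoil the $o(R)$ error: since $\theta_{k,R}$ depends continuously (indeed, by the implicit function theorem, smoothly) on $k$ away from $k=0$ and the asymptotic expansions above are locally uniform in $k$ on compact subsets of $(0,\infty)$, replacing $k$ by $k^{el}_R\to C^{el}$ changes the leading $R$-coefficient by $o(1)$, hence the energy by $o(R)$. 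Combining, $\tot = P^{tot}_{k^{el}_R,R}(\theta_{k^{el}_R,R}) + o(1) = E^{el}_{C^{el}}(R) + E^{pl}_{C^{el}}(R) + o(R)$, which is the assertion; convergence $\theta_R\to\alpha^{-1}$ is already contained in Proposition \ref{prop:minimizzante} and is re-derived here as the $t_R = O(R^{-1/2})$ estimate. The main obstacle I anticipate is purely computational bookkeeping: getting the exact constant in the expansion of $(\alpha\theta_{k,R}-1)^2$ right, since the answer $\frac{\sigma^2}{\alpha^3 k}R$ indicates the relevant balance involves the \emph{second}-order term of the critical-point equation, not just the naive leading balance, and one must be careful that the $o(R)$ remainder genuinely absorbs both the higher-order terms in $\theta_{k,R}$ and the perturbation $k\mapsto k^{el}_R$.
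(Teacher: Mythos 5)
Your overall strategy coincides with the paper's: differentiate the explicit cubic $P^{tot}_{k,R}$, locate the interior critical point, expand it in powers of $1/R$, and feed the result into \eqref{finalst} with $k=k^{el}_R\to C^{el}$. The endpoint analysis (derivative equal to $-2\sigma R^2\alpha^{-1}<0$ at $\theta=\alpha^{-1}$, positive at $\theta=1$ for large $R$) and the closing remark on uniformity in $k$ are fine; the paper instead solves the quadratic $(P^{tot}_{k,R})'=0$ exactly and Taylor-expands the resulting square root $f(R)=\sqrt{1+4c/R+c^2/R^2}$, which amounts to the same thing.

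However, there is a genuine error in your central asymptotic step. You assert that near $\theta=\alpha^{-1}$ the derivative of the elastic term behaves like $kR^3\cdot 2\alpha^2\alpha^{-1}t_R^2$ ``since $\theta(\alpha\theta-1)^2$ has a double zero'', and from the balance $R^3t_R^2\sim R^2$ you deduce $t_R\sim R^{-1/2}$. This confuses the function with its derivative: $P^{el}_{k,R}$ has a double zero at $\alpha^{-1}$, so its $\theta$-derivative has a \emph{simple} zero there. Writing $\theta=\alpha^{-1}+t$ one has $(P^{el}_{k,R})'(\theta)=kR^3\left(2\alpha t+3\alpha^2t^2\right)$, so the correct balance is $2\alpha kR^3t_R\approx 2\sigma R^2\alpha^{-1}$, i.e.\ $t_R=\frac{\sigma}{\alpha^2kR}+o(1/R)$, not $R^{-1/2}$. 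This is exactly what the paper's explicit root $\theta_+(R)=\frac{1}{3\alpha}\left(2+\frac{c}{R}+f(R)\right)$ gives, and it produces $(\alpha\theta_{k,R}-1)^2=\frac{\sigma^2}{\alpha^2k^2R^2}(1+o(1))$, hence $P^{el}_{k,R}(\theta_{k,R})=kR^3\alpha^{-1}\cdot\frac{\sigma^2}{\alpha^2k^2R^2}+o(R)=\frac{\sigma^2}{\alpha^3k}R+o(R)$ and $E^{pl}_R(\theta_{k,R})=\sigma R^2(1-\alpha^{-2})-2\sigma R^2\alpha^{-1}t_R+o(R)=E_k^{pl}(R)+o(R)$. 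With your rate $t_R\sim R^{-1/2}$ the elastic term would be of order $R^2$ and the plastic correction of order $R^{3/2}$, contradicting \eqref{taylorthm}. You do notice the mismatch with $E_k^{el}(R)$, but resolving it by ``solving the critical equation to the order that makes the contributions consistent'' is circular: the expansion must be derived, not fitted to the answer. Since \eqref{taylorthm} is the entire content of the theorem beyond existence of the minimizer, this step needs to be redone, either via the correct first-order balance above or by expanding the explicit root as in the paper.
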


%As a consequence we have that the contribution for $R \to \infty$ of the elastic energy is linear.

%In order to prove Theorem \ref{il teorema} we analyze the family $P^{tot}_{k,R} $.

%\begin{lemma} \label{prop:proprietà polinomi}
%For every $R, \, k>0$ there exists a unique minimizer $\theta_{k,R}$ of $P^{tot}_{k,R} $ in $[\alpha^{-1},1]$ such that  $\theta_{k,R} \to {\alpha}^{-1}$ as $R \to + \infty$. Moreover, we have
%% if we decompose
%%$P^{tot}_{k,R} = P^{el}_{k,R} (\theta) + E^{pl}_{k,R}  $
%%with
%%$P^{el}_{k,R} (\theta)  (\theta):= k R^3 \theta {(\alpha \theta -1)}^2$ and
%%$E^{pl}_{k,R}  (\theta):= \sigma R^2 (1-\theta^2)$
%%we have 
%\begin{align}
% P^{el}_{k,R} (\theta) (\theta_{k,R}) & = \frac{\sigma^2}{\alpha^3 k} \, R + o \left( \frac{1}{R} \right)  
% \label{correlazione} \\
% E^{pl}_{k,R}  (\theta_{k,R}) & = \sigma R^2 \left(   1- \frac{1}{\alpha^2}      \right) - 2 \frac{\sigma^2}{\alpha^3 k} R +o \left( \frac{1}{R} \right) \,. \label{limite dislocazione}  
%\end{align}
%\end{lemma}

\begin{proof} We compute the derivative of $P^{tot}_{k,R}$ with respect to $\theta$ 
\[
(P^{tot}_{k,R})' (\theta) = R^2 \left\{   (3 \alpha^2 k R) \theta^2 - 2 (2 \alpha k R + \sigma ) \theta + k R            \right\}.
\] 
One can check  that it vanishes at 
\begin{equation} \label{expr}
\theta_{\pm} (R)= \frac{1}{3 \alpha} \left\{ 2 + \frac{ c}{R}  \pm f(R)  \right\},
\end{equation}
where 
\begin{equation}\label{exprf}
f(R) := \sqrt{ 1+  \frac{4c}{R} + \frac{c^2}{R^2}         } \, \, \,  \quad \text{ and } \quad  c:= \frac{\sigma}{\alpha k } \,.
\end{equation}
Since $f(R) > 1$ we have $\theta_{+}(R) > \alpha^{-1}$. Moreover, 
$f(R) \to 1$, and thus $\theta_{+} (R) \to {\alpha}^{-1}$,  as $R \to +\infty$.
Hence $\theta_{+} (R) \in [\alpha^{-1},1]$ for $R$ large enough. Also note that
$\theta_{-} (R)<\alpha^{-1}$ for $R$ sufficiently large. 
The second derivative is given by
\[
(P^{tot}_{k,R})'' (\theta) = R^2 \left\{ (6 \alpha^2 k R) \theta - 2 (2 \alpha k R + \sigma ) \right\} ,
\]
which can be checked to be nonnegative at $\theta_+ (R)$
\[
(P^{tot}_{k,R})'' (\theta_+ (R))= 2 \alpha k R^3 f(R) \geq 0 \,.
\]
This proves that $\theta_{k,R}:=\theta_{+} (R)$ is the unique minimizer of
$P^{tot}_{k,R} $ in $[\alpha^{-1},1]$, for $R$ sufficiently large. Moreover from \eqref{expr} we conclude that
 $\theta_{k,R} \to \alpha^{-1}$ as $R \to + \infty$. 
%We now want to compute $P^{el}_{k,R} (\theta)  (\theta_{k,R})$ and show that it is linear.

Evaluating $P^{el}_{k,R}$ and $E^{pl}_R$ at $\theta = \theta_{k,R} $ we find
\begin{align} \label{plug1}
& P^{el}_{k,R} (\theta_{k,R})= \frac{2}{27 \alpha^4 k^2} \left\{ 2 \sigma^3 +  2 \alpha k \sigma^2 (3+f)  R + \left( 2 \alpha^2 k^2 \sigma f \right) R^2 +
\alpha^3 k^3 (1-f)  R^3 \right\}, \\
\label{plug1bis}
& E^{pl}_R(\theta_{k,R})= \sigma R^2 (1- \theta_{k,R}^2).
\end{align}
In order to show \eqref{taylorthm} we perform a Taylor expansion in \eqref{exprf} and \eqref{expr}. 
Using $\sqrt{1+x}=1+x/2-x^2/8+x^3/16 +o(x^3)$ we compute \begin{equation} \label{taylor}
f (R)  = 1 + 2 \left( \frac{ \sigma}{ \alpha k } \right) \frac{1}{R} - \frac{3}{2} \, \left( \frac{\sigma^2}{\alpha^2 k^2} \right) \frac{1}{R^2} + 3\left(  \frac{ \sigma^3}{\alpha^3 k^3} \right) \frac{1}{R^3}    +o \left( \frac{1}{R^3} \right) \,.
\end{equation}
Using \eqref{taylor} we can expand the terms in \eqref{plug1} to get
\begin{align}
	  2 \alpha k \sigma^2 (3+f)  R = (8 \alpha k \sigma^2 ) R + 4 \sigma^3 + o \left( {R} \right)   \label{plug2}  \\
	\left( 2 \alpha^2 k^2 \sigma f \right) R^2 = (2 \alpha^2 k^2 \sigma )R^2 + (4 \alpha k \sigma^2 ) R - 3 \sigma^3 + o \left( {R} \right)   \label{plug3} \\
	\alpha^3 k^3 (1-f) R^3 = -(2 \alpha^2 k^2 \sigma )R^2 + \frac{3}{2} ( \alpha k \sigma^2 ) R  - 3 \sigma^3 + o \left( {R} \right)  \label{plug4}
\end{align}
Plugging \eqref{plug2}-\eqref{plug4} into \eqref{plug1} yields the first equation in \eqref{taylorthm}.
%In order to prove the second relation  in \eqref{taylorthm} 
Next we compute 
\begin{equation} \label{plug5} 
\theta_{k,R}^2 = \frac{1 }{9 \alpha^2} \left\{ 5 + 4 f + 2 c (4+f) \frac{1}{R}+ \frac{2 c}{R^2} \right\} \, .
\end{equation}
Plugging \eqref{taylor} into \eqref{plug5} gives
\begin{equation} \label{plug6}
\theta_{k,R}^2=\frac{1 }{ \alpha^2} \left\{ 1 +  \frac{2c}{R} + o \left( \frac{1}{R^3} \right) \right\} \,.
\end{equation}
The second relation in  \eqref{taylorthm}
follows by inserting 
 \eqref{plug6} into \eqref{plug1bis}. 
%$E^{pl}_{k,R}  (\theta_{k,R})= \sigma R^2 \left( 1 - \theta_{k,R}^2 \right)$
%we get the second equality in  \eqref{taylorthm}.

The last part of the statement follows from \eqref{finalst}, taking into account that $\theta_{k,R} \to {\alpha}^{-1}$ and $k_R^{el} \to C^{el}$ as $R \to + \infty$; the details are 
safely left to the reader. 
\end{proof}

\section{Conclusions and perspectives}
In this paper we have proposed a simple continuous model for dislocations at semi-coherent interfaces. 
Our analysis seems flexible enough to describe different  interfaces and several crystalline configurations. Here we discuss the main achievements of this paper, possible extensions to other physical systems, and future perspectives.

\subsection{Main achievements}
In the first part of the paper we have analyzed a line tension model for dislocations at semi-coherent interfaces, in the context of nonlinear elasticity. Within this model, we have shown that there exists a critical size of the crystal such that dislocations become energetically more favorable than purely elastic deformations. More precisely,
we have shown that the energy induced by dislocations scales as the surface area of the interface, while the purely elastic energy scales as the volume of the crystal.  This reflects the fact that dislocations form periodic networks at the interface. Even without giving a rigorous proof of this fact, we have made an explicit construction of a periodic array of dislocations, which is optimal in the scaling of the energy. 

Once proved that the energy scales as the surface of the interface, 
 we have proposed a simpler and more specific continuous model for dislocations, describing dislocations between two 
 crystals with different lattice parameters and, to some extent,  dislocations  in heterostructured nanowires and in epitaxial crystal growth. 
 %or more in general  epitaxial crystal growth.   
 In such a model the area of the reference configuration of the overlayer is a free parameter, while in the deformed configuration there is a perfect match between the underlayer and the overlayer. 
 
  The variational formulation seems to be quite elegant and effective. As a matter of fact  it is very basic, depending only on three parameters: the diameter of the underlayer, the misfit between the lattice parameters, and the free boundary, described by a single parameter: the area gap between the reference underlayer and overlayer, tuning the amount of dislocations at the interface. 
 If the interface is saturated with  dislocations, then the energy reduces to the surface dislocation energy, while if no dislocation is present, then the energy reduces to the volume elastic energy 
 resulting from the incompatibility of the stress free configurations for the two crystal structures. 
 
The proposed variational model is   
reach enough to describe the size effects already discussed. Indeed, we have show that, in the limit $R\to +\infty$, the surface energy induced by dislocations is predominant (scaling like $R^2$), while the volume elastic energy represents a lower order term (scaling like $R$). Since the elastic energy is vanishing, we can linearize the problem: The asymptotic behaviour of the total energy functional is explicit, depending only on the material parameters in the energy functional, and on the linearized elastic tensor. 

The only unknown  parameter in our formulation is $\sigma$, which roughly speaking (multiplied by $b$) represents the energy per  unit dislocation length  (while $\sigma (\alpha^2-1)$ represents the energy per unit  area of the interface). 
We have proposed some explicit formula for $\sigma$, depending only on the elastic tensor and on a core energy parameter $\gamma^{ch}$, describing the core (chemical) energy per unit dislocation length (see \eqref{sigma?}).

Notice that, since our minimizers are almost explicit, in principle one could fit such parameters through experimental data. In particular, one could implicitly measure $\sigma$  
and $\gamma^{ch}$ as follows. In our model, the lower base of the overlayer is, in the  
deformed configuration, a square of side $R$, while the free upper base (being stress free, at least for large height $h$) has size $R^{up}\approx \alpha \theta R$. 
Such quantity is  observable, and together with the explicit expression \eqref{plug6}  for the optimal $\theta$, 
implicitly determines $\sigma$ (and thus $\gamma^{ch}$) as follows
$$
\sigma= \frac{C^{el}}{2} \left( \frac{(R^{up})^2}{\alpha R} - \frac{R}{\alpha^2}\right).
$$ 

\subsection{Perspectives}
We have worked in a bounded domain $\Omega=S_r \times [0,hr]$, suited to describe nanowires as well as epitaxial crystal growth.  
We point out that all the results are true also if $h=+\infty$, with minor technical changes.

Moreover, using the same ideas from the previous section, we can model different crystal configurations. For instance, consider  two wires $N_{\rm int}$ and $N_{\rm ext}$ one inside the other. Specifically, the external wire can be represented by $(S_{2R}\setminus S_{R}) \times (0,h R)$ and the internal by $S_{\theta R} \times (0,hR)$ with  $\theta \in [\alpha^{-1},1]$. Here $h>0$ is a fixed height and $\alpha I$ is the equilibrium of $N_{\rm int}$, with $\alpha >1$. The external wire is already in equilibrium. The admissible  deformations of $N_{\rm int}$ are  maps $v \colon N_{\rm int} \to \rtre$ such that $v=\theta^{-1} x$ on
the lateral boundary of $N_{\rm int}$, so that it  matches the internal lateral boundary of $N_{\rm ext}$.
% so that $v(\partial N_{\rm int})$ will match $\partial N_{\rm ext}$.

As in the previous models, the total energy is given by the sum of an elastic term and a plastic term, the latter proportional to the reference surface mismatch between
the lateral boundaries of the nanowires:
%:The cost of $v$ will be
\begin{equation} \label{elastic2}
E^{tot}(v,\theta) = \int W (\nabla v) \, dx +  \sigma h R^2 (1-\theta) \,.
\end{equation}
% with $\Omega_{R}:= S_{R} \times [0,h R]$ and we will pay a dislocation energy proportional to the initial surface mismatch between $\partial N_{\rm ext}$ and $\partial N_{\rm int}$, i.e.
% \[
% \sigma h r^2 (1-\theta) \,.
% \]
%If we rescale the elastic energy \eqref{elastic2} by $\theta r$, as in the previous model, we then have
% \[
%\mu (\theta,r) := r^3 \theta^3 \, e(\theta) + \sigma h r^2 (1-\theta) 
% \]
% where
% \[
% e(\theta) :=\inf \left\{ \int_{\Omega_1} W(\nabla v) \, dx  \, \colon \, v \in H^1(\Omega_1 ;\rtre) , \, v= \theta^{-1} x   \text{ on } \partial S_1 \times (0, h) \right\} \,.
% \]
If $\theta=1$ the two wires coincide and the energy is entirely elastic. If
$\theta=\alpha^{-1}$ then the elastic energy has minimum zero and $E^{tot}$ is purely dislocation energy. If $\theta \in (\alpha^{-1},1)$ then none of the two contributions is zero and we are in a mixed case. 
For such physical system we can carry on the same analysis of Section 2, up to very minor changes.

Our model could be used to describe epitaxial crystal growth: the total energy in this context should be completed by adding the surface energy induced by the exterior boundary of the overlayer. 

Another challenging investigation concerns the energy induced by dislocations at grain boundaries, where the misfit between the crystal lattices are described by rotations rather than dilations. The basic example is given by 
a partition of  a reference polycrystal $\Om$ into grains $\Om_i$, 
where each grain has an orientation determined by a rotation $R_i$ of a reference lattice. 
As in our model, the shape of the grain $\Om_i$ should not be prescribed, so that the variational problem involves free boundaries. Also the rotations $R_i$ could enter the minimization process, which should be completed by suitable boundary conditions or forcing terms. 

\subsection{Conclusions}
This paper proposes a basic  variational model  describing the competition between the plastic energy spent at interfaces, and the corresponding release of bulk energy. 
In  this variational formulation, the size of the interface is a free parameter. In this respect, our model fits into the class of so called free boundary problems. 

The proposed  energy is built upon some heuristic arguments, supported by 
formal mathematical derivations based on the semi-discrete theory of dislocations. 

While the paper focuses on a specific configuration, the method seems flexible to be extended to several  crystalline structures and to different physical contexts, such as grain boundaries and epitaxial growth.  
Depending on the specific context under examination, the total energy functional could be enriched, taking into account external surface energies, mutual rotations between the lattice structures, forcing terms and boundary conditions.

\nocite{*}
\bibliography{bibliografia}

\bibliographystyle{plain}

%\printbibliography
\end{document}